\newcommand{\tr}{\text{tr}}
\newtheorem{thm}{Theorem}[section]
\newtheorem{cor}[thm]{Corollary}
\newtheorem{lem}[thm]{Lemma}
\newtheorem{prop}[thm]{Proposition}
\newtheorem{defn}[thm]{Definition}
\newtheorem{rem}[thm]{\bf{Remark}}
\numberwithin{equation}{section}
\def\tr{\mbox{tr}}
\begin{document}

\title{\bf On the existence of harmonic metrics on non-Hermitian Yang-Mills bundles}
\author{Changpeng Pan, Zhenghan Shen, Xi Zhang}

\address{Changpeng Pan\\School of Mathematics and Statistics\\
Nanjing University of Science and Technology\\
Nanjing, 210094,P.R. China\\ }
\email{mathpcp@njust.edu.cn}

\address{Zhenghan Shen\\School of Mathematics and Statistics\\
Nanjing University of Science and Technology\\
Nanjing, 210094,P.R. China\\}\email{mathszh@njust.edu.cn}

\address{Xi Zhang\\School of Mathematics and Statistics\\
Nanjing University of Science and Technology\\
Nanjing, 210094,P.R. China\\ } \email{mathzx@ustc.edu.cn}

\subjclass[2020]{53C07, 57N16}
\keywords{non-Hermitian Yang-Mills connection, harmonic metric, heat flow}

\maketitle

\begin{abstract}
In this paper, we study the non-Hermitian Yang-Mills (NHYM for short) bundles over compact K\"ahler manifolds. We show that the existence of harmonic metrics is equivalent to the semisimplicity of NHYM bundles, which confirms the Conjecture 8.7 in \cite{KV}.

\end{abstract}

\vskip 0.2 true cm


\pagestyle{myheadings}
\markboth{\rightline {\scriptsize C. Pan et al.}}
         {\leftline{\scriptsize On the existence of harmonic metrics on NHYM bundles}}

\bigskip
\bigskip


\section{ Introduction}
Let $(X,\omega)$ be a K\"ahler manifold of dimension $n$, $(E,D)$ be a complex flat vector bundle over $X$. Given a Hermitian metric $H$ on $E$, there is a unique decomposition
\begin{equation}
D=D_{H}+\psi_{H},
\end{equation}
where $D_{H}$ is an $H$-unitary connection and $\psi_{H}\in \Omega^{1}(\mbox{End}(E))$ is self-adjoint with respect to $H$.
By the Riemann-Hilbert correspondence, we know that there is a one-to-one correspondence between the flat bundle $(E,D)$ and the fundamental group representation $\rho_{D}:\pi_{1}(X)\rightarrow \mbox{Gl}(r,\mathbb{C})$. Then any Hermitian metric $H$ on $E$ induces a $\rho_{D}$-equivariant map
\begin{equation}
    f_{H}:\tilde{X}\rightarrow \mbox{Gl}(r,\mathbb{C})/\mbox{U}(r),
\end{equation}
where $\tilde{X}$ is the universal covering space of $X$. A Hermitian metric $H$ is called harmonic if $f_{H}$ is a  harmonic map.  On the other hand, we know that $\psi_{H}=-\frac{1}{2}f_{H}^{-1}df_{H}$ (\cite{Gui}). So $H$ is a harmonic metric if and only if it is a critical point of the following energy functional
\begin{equation}\label{eq:f}
    E(D,H)=\frac{1}{2}\int_{X}|\psi_{H}|^{2}_{H,\omega}\frac{\omega^{n}}{n!}.
\end{equation}
That is, it satisfies the Euler-Lagrange equation
\begin{equation}
	D_{H}^{*}\psi_{H}=0.
\end{equation}

The notion of harmonic maps was introduced by Eells and Sampson (\cite{ES}) in the 1960s and generalized to harmonic metric on flat bundles by Corlette (\cite{Cor}). Donaldson (\cite{Don3}) and Corlette (\cite{Cor}) proved that $(E,D)$ admits a harmonic metric if and only if it is semisimple. This result has also been extended to some noncompact or non-K\"ahler manifolds by Jost-Zuo (\cite{JZ1,JZ2}), Simpson (\cite{S3}), Mochizuki (\cite{M2}), Collins-Jacob-Yau (\cite{CJY}), Pan-Zhang-Zhang (\cite{PZZ}) and Wu-Zhang (\cite{WZ}).

On the other hand, we have the Hitchin-Kobayashi correspondence which states that the stability of holomorphic  vector bundles is equivalent to the existence of irreducible Hermitian-Einstein metrics. It was proved by Narasimhan-Seshadri (\cite{NarSe}) for Riemann surfaces, Donaldson (\cite{Don2,Don1}) for algebraic manifolds and Uhlenback-Yau (\cite{UhYau}) for higher dimension K\"ahler manifolds. Hitchin (\cite{Hit}) and Simpson (\cite{S1}) proved the Hitchin-Kobayashi correspondence for Higgs bundles. Based on the results of Corlette (\cite{Cor}), Donaldson (\cite{Don3}), Hitchin (\cite{Hit}) and Simpson (\cite{S1,S3}), one can obtain the famous non-abelian Hodge correspondence between the moduli space for semisimple flat bundles and the moduli space for poly-stable Higgs bundles with vanishing Chern numbers. There are many important and interesting generalizations of the non-abelian Hodge correspondence, such as: Jost and Zuo (\cite{JZ2}), Biquard and Boalch (\cite{BB}), Mochizuki (\cite{M1,M2}) for the quasiprojective varieties case; Bradlow, Garc\'ia-Prada and Mundet i Riera (\cite{BGM}), Garc\'ia-Prada and Mundet i Riera (\cite{GM1}), Garc\'ia-Prada, Gothen and I.Mundet i Riera (\cite{GGM}) for the the principal $G$-Higgs bundles case; Biswas and Kasuya (\cite{BK1,BK2}) for Sasakian manifolds case.

\medskip
A connection $D$ in $E$ is called non-Hermitian Yang-Mills (NHYM for short) if its curvature $F_D$
satisfies
\begin{equation}\label{DFNH}
	F_{D}=D^{2}\in \Omega^{1,1}_{X}(\mbox{End}(E))\ \ \ \text{and} \ \ \sqrt{-1}\Lambda_{\omega}F_{D}=\lambda Id_{E},
\end{equation}
where $\lambda\in\mathbb{R}$ is a constant. $(E,D)$ is called a  NHYM bundle if $D$ is a NHYM connection. This concept was first introduced by Kaledin and Verbitsky in \cite{KV}. Their original motivation is to extend the Corlette-Donaldson-Hitchin-Simpson's correspondence to vector bundles with arbitrary Chern class. In their paper, they studied the moduli space of NHYM connections $\mathscr{M}$, and they proved that $\mathscr{M}$ is naturally complex analytic with dimension twice that of the subspace of Hermitian Yang-Mills connections $\mathscr{M}_{0}$. They also constructed a complex symplectic $2$-form near $\mathscr{M}_{0}$, and they conjectured that this form gives rise to a hyper-K\"ahler structure on the whole of $\mathscr{M}$.  At the end of their paper, the authors listed some open problems and conjectures which generalize the known facts about flat bundles to non-Hermitian Yang-Mills bundles.

In analogy with the flat bundle case, we define the harmonic metrics on NHYM bundles as follows.
\begin{defn}
	Let $(E,D)$ be a NHYM bundle, a Hermitian metric $H$ is called harmonic if it is a critical point of the energy functional (\ref{eq:f}), i.e., $D_{H}^{*}\psi_{H}=0$.
\end{defn}

The main work of this paper is to prove the following theorem which gives the answer of the Conjecture 8.7 in \cite{KV}.
\begin{thm}\label{thm:m}
	Let $(X,\omega)$ be a compact K\"ahler manifold, $(E,D)$ be a NHYM bundle over $X$. Then $(E,D)$ admits a harmonic metric if and only if it is semisimple.
\end{thm}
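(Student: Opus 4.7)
The plan is to adapt the Corlette-Donaldson heat flow strategy to the NHYM setting, treating the two implications separately.

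For \emph{harmonic implies semisimple}, given a $D$-invariant subbundle $E_{1}\subset E$ with $H$-orthogonal projection $\pi$, the goal is to show that $E_{1}^{\perp_{H}}$ is also $D$-invariant; equivalently, the off-diagonal block $\pi(D\pi)(\mathrm{Id}-\pi)$ of $D\pi$ must vanish, since its mirror block $(\mathrm{Id}-\pi)(D\pi)\pi$ already vanishes by the $D$-invariance of $E_{1}$. I would run Corlette's Bochner argument in this setting: decompose $D=D_{H}+\psi_{H}$, use the self-adjointness of $\psi_{H}$, the identity $F_{D}\pi=\pi F_{D}\pi$ (following from $D$-invariance of $E_{1}$), the K\"ahler identities, and the harmonic equation $D_{H}^{*}\psi_{H}=0$ to produce an $L^{2}$ identity that forces the remaining off-diagonal block to vanish; the trace condition $\sqrt{-1}\Lambda_{\omega}F_{D}=\lambda\,\mathrm{Id}_{E}$ contributes only a multiple of $\pi$, which is annihilated by the off-diagonal projection.

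For \emph{semisimple implies harmonic}, I would introduce the nonlinear heat flow
\begin{equation*}
H_{t}^{-1}\partial_{t}H_{t}=-2\,D_{H_{t}}^{*}\psi_{H_{t}},\qquad H_{0}=H_{\mathrm{init}},
\end{equation*}
whose stationary points are precisely harmonic metrics. The energy in (\ref{eq:f}) is monotonically decreasing along the flow, with $\tfrac{d}{dt}E(D,H_{t})=-2\int_{X}|D_{H_{t}}^{*}\psi_{H_{t}}|_{H_{t}}^{2}\,\tfrac{\omega^{n}}{n!}$. Short-time existence is standard parabolic theory. For long-time existence I would derive a Donaldson-type parabolic differential inequality of the form $(\partial_{t}-\Delta)|\psi_{H_{t}}|^{2}\le C|\psi_{H_{t}}|^{2}$, crucially using that $D$ and hence $F_{D}$ are fixed throughout the flow while only the splitting $D=D_{H_{t}}+\psi_{H_{t}}$ evolves; combined with the maximum principle this gives an $L^{\infty}$-bound on $\psi_{H_{t}}$ and, by bootstrapping, control on all covariant derivatives of $h_{t}=H_{0}^{-1}H_{t}$ in terms of $\|h_{t}\|_{L^{\infty}}$.

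The convergence analysis as $t\to\infty$ is where semisimplicity enters, and this is the main obstacle. If $\|\log h_{t}\|_{L^{\infty}}$ remains bounded, standard elliptic bootstrapping yields smooth subsequential convergence to a harmonic limit. If it diverges, one rescales and, via the Uhlenbeck-Yau-Simpson procedure, extracts a weak limit $\pi_{\infty}\in L^{2}_{1}(\mathrm{End}(E))$; the Uhlenbeck-Yau regularity theorem identifies $\pi_{\infty}$ with a saturated coherent subsheaf $\mathcal{F}\subset E$ satisfying $(\mathrm{Id}-\pi_{\infty})\bp\pi_{\infty}=0$. The hardest step I anticipate is upgrading this $\bp$-invariance to full $D$-invariance: one must establish the analogous weak identity $(\mathrm{Id}-\pi_{\infty})D'\pi_{\infty}=0$ for the $(1,0)$-part $D'$ of $D$, leveraging the vanishing of the $(2,0)$-component of $F_{D}$ and the self-adjointness relating $\psi_{H_{t}}^{1,0}$ and $\psi_{H_{t}}^{0,1}$ in the limit. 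Once $\mathcal{F}$ is shown to be $D$-invariant, semisimplicity produces a $D$-invariant complement, decomposing $(E,D)$ as a direct sum of NHYM bundles of lower rank, and induction on rank completes the proof.
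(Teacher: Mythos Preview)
Your overall strategy coincides with the paper's: a heat flow on metrics, long-time existence via parabolic estimates, and in the blow-up scenario an Uhlenbeck--Yau--Simpson extraction of a destabilizing $D$-invariant subbundle. The ``harmonic $\Rightarrow$ semisimple'' direction is also handled the same way in both (your Bochner argument is the paper's trace/Stokes computation in disguise).

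There are two points where your proposal diverges from, and is less sharp than, the paper. First, you flag ``upgrading $\bar\partial$-invariance to full $D$-invariance'' of the limiting projection as the hardest step; in the paper this step does not arise at all. The reason is that the paper introduces a Donaldson-type functional whose second term is $\int_{X}\langle\Psi(s)(Ds),Ds\rangle_{K}$ built from the \emph{full} connection $D$ rather than $\bar\partial$. Running Simpson's argument with this functional, the weak limit $u_{\infty}$ automatically has eigenprojections $\pi_{\alpha}$ satisfying $(\mathrm{Id}-\pi_{\alpha})D\pi_{\alpha}=0$ in $L^{2}_{1}$, and a short bootstrap (the paper's Lemma~\ref{l:1}) makes them smooth $D$-subbundles. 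Your plan, which only mentions the energy $E(D,H)$ and its monotonicity, is missing precisely this ingredient: energy decay alone gives a sequence with $D_{H_{t_{i}}}^{*}\psi_{H_{t_{i}}}\to 0$ in $L^{2}$, but not the inequality of Simpson type needed to control $\|Du_{i}\|_{L^{2}}$ and extract the limit. Second, your endgame (split off the subbundle and induct on rank) is more circuitous than necessary. The paper instead reduces to the simple case at the outset---a semisimple bundle is a direct sum of simple NHYM bundles, and a harmonic metric on the sum is just a product of harmonic metrics on the factors---and then uses the subbundle produced by the blow-up analysis purely as a \emph{contradiction} to simplicity, forcing the $C^{0}$-bound $\|\log h(t)\|_{L^{\infty}}<C$ directly.
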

In this paper, we use the heat flow approach to prove the existence of harmonic metrics. We study the following equation on the NHYM bundle $(E,D)$,
\begin{equation}
	\left\{\begin{split}
	&H^{-1}(t)\frac{\partial H(t)}{\partial t}=2D_{H(t)}^{*}\psi_{H(t)},\\
	&H(0)=H_{0}.
	\end{split}\right.
\end{equation}
The NHYM conditions ensures that the heat flow is a parabolic flow and therefore a short-time solution exists. The long-time existence of the flow can also be proved by a classical method. We also introduce the concept of Donaldson's functional $\mathcal{M}(K,H)$ on NHYM bundles, and prove that it is decreasing along the flow. These concepts are widely used in the proofs of the Hitchin-Kobayashi correspondence. It is important to note that the NHYM condition is critical in all relevant calculations.

This paper is organized as follows. In Section \ref{sec:Pre}, we introduce some basic concepts and show that the existence of harmonic metrics implies the semisimplicity. In Section \ref{sec:DF}, we introduce the Donaldson type functional on NHYM bundles. In Section \ref{sec:FL}, we introduce a heat flow and prove the long time existence of such flow. In Section \ref{sec:FP}, we complete the remainder proof of Theorem \ref{thm:m}.

\section{Preliminary}\label{sec:Pre}
Let $(X,\omega)$ be a K\"ahler manifold of dimension $n$, $E$ be a complex vector bundle of rank $r$ over $X$. If $D$ is a NHYM connection on $E$, then we call $(E,D)$ a NHYM bundle. By the definition, the $(0,1)$-part of $D$ determines a holomorphic structure in $E$. Let $\mathcal{E}$ be the sheaf of holomorphic sections. 
In analogy with the flat bundle case, we can define the simplicity of $(E,D)$.

\begin{defn}
A subbundle $S\subset E$ is called $D$-subbundle of $(E,D)$ if it is a $D$-invariant subbundle of $E$.
\end{defn}

\begin{defn}
 We say the NHYM bundle $(E,D)$ is simple if there is no proper $D$-subbundle. We say $(E,D)$ is semisimple if it is the direct sum of several simple NHYM bundles.
\end{defn}

\begin{lem}\label{l:0}
	Let $\mathcal{S}$ be a coherent subsheaf of $\mathcal{E}$, $S$ be the holomorphic bundle on $X\setminus \Sigma$ corresponding to $\mathcal{S}$, where $\Sigma$ is the singularity set of $\mathcal{S}$. If $S$ is a $D$-subbundle of $E|_{X\setminus\Sigma}$, then there exists a $D$-subbundle $\bar{S}$ of $E$ on $X$ such that $\bar{S}|_{X\setminus \Sigma}=S$.	
\end{lem}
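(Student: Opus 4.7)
My plan is to realize $\bar{S}$ as the image of a smooth orthogonal projection, extending the projection onto $S$ across the singular set $\Sigma$ by bootstrapping the full NHYM $D$-invariance. Fix any smooth Hermitian metric $H$ on $E$, and over $X \setminus \Sigma$ let $\pi \in \Gamma(\mathrm{End}(E))$ denote the $H$-orthogonal projection onto $S$. Then $\pi^{2} = \pi$, $\pi^{\ast} = \pi$, and the hypothesis that $S$ is a $D$-subbundle is equivalent to
\[
(\mathrm{Id}_{E} - \pi)\, D\pi = 0 \quad \text{on } X \setminus \Sigma,
\]
which splits along the bigrading into the weakly-holomorphic-type condition $(\mathrm{Id} - \pi)\, D''\pi = 0$ and the extra NHYM condition $(\mathrm{Id} - \pi)\, D'\pi = 0$.

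Since $\mathcal{S}$ is a coherent subsheaf of the locally free sheaf $\mathcal{E}$, it is torsion-free, so $\Sigma$ has complex codimension at least two. By the classical weakly-holomorphic-subbundle construction of Uhlenbeck--Yau (see, e.g., Kobayashi's book, or Siu's lectures), the pointwise bounded $\pi$ extends from $X \setminus \Sigma$ to a section $\pi \in L^{\infty}(\mathrm{End}(E)) \cap L^{2}_{1}(\mathrm{End}(E))$ on all of $X$, still satisfying $\pi^{2} = \pi$, $\pi^{\ast} = \pi$, and $(\mathrm{Id} - \pi)\, D''\pi = 0$ weakly. The $D'$-invariance extends similarly to a weak identity on $X$.

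The crux is then to boost $\pi$ from $L^{\infty} \cap L^{2}_{1}$ regularity to $C^{\infty}$. I would combine the two invariance identities with the NHYM integrability relations $(D')^{2} = 0$, $(D'')^{2} = 0$, and $D' D'' + D'' D' = F_{D} \in \Omega^{1,1}(\mathrm{End}(E))$ to derive a second-order elliptic equation for $\pi$ of the schematic form $\Delta \pi = Q(\pi, D\pi, F_{D})$ with nonlinearities polynomial in $\pi$ and $D\pi$ and coefficients controlled by the bounded curvature $F_{D}$. Starting from the $L^{\infty} \cap L^{2}_{1}$ regularity and using that $\Sigma$ has complex codimension at least two, an elliptic bootstrap in the spirit of the smoothness step in the Uhlenbeck--Yau proof of Hitchin--Kobayashi forces $\pi \in C^{\infty}(\mathrm{End}(E))$ on the whole of $X$. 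Once $\pi$ is a smooth idempotent on the connected manifold $X$, the rank of $\mathrm{Im}(\pi)$ is locally constant, so $\bar{S} := \mathrm{Im}(\pi)$ is a genuine smooth subbundle of $E$ agreeing with $S$ on $X \setminus \Sigma$; the identity $(\mathrm{Id} - \pi)\, D\pi = 0$ then passes to all of $X$ by continuity, showing that $\bar{S}$ is a $D$-subbundle.

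The hard part is precisely this elliptic bootstrap across the codimension-two singular set: the $L^{2}_{1}$ regularity coming from Uhlenbeck--Yau alone is not enough to declare $\pi$ smooth, and one must genuinely exploit the additional $D'$-invariance, coupled to $D''$-invariance through the NHYM integrability conditions, to produce a coercive elliptic system for $\pi$ whose solutions are smooth even where $\mathcal{S}$ initially appears singular.
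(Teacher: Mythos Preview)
Your overall strategy---extend the $H$-orthogonal projection $\pi$ across $\Sigma$ as an $L^\infty\cap L^2_1$ section and then upgrade its regularity---matches the paper's. The substantive difference, and the gap in your proposal, is in the regularity step. You propose to manufacture a second-order elliptic equation $\Delta\pi = Q(\pi,D\pi,F_D)$ from the integrability conditions and then bootstrap from $L^\infty\cap L^2_1$. But with $Q$ quadratic in $D\pi$ this is exactly the critical harmonic-map-type nonlinearity, and in real dimension $\geq 4$ an $L^\infty\cap L^2_1$ solution of such an equation does \emph{not} automatically bootstrap to smoothness; you would need an $\epsilon$-regularity argument or some additional smallness, and you have not indicated how to get it. Your closing paragraph essentially concedes this.

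The paper sidesteps the issue entirely with a first-order algebraic identity, which is the idea you are missing. Differentiating $\pi^2=\pi$ and using $(\mathrm{Id}-\pi)D\pi=0$ gives $D\pi\cdot\pi=0$; taking $H$-adjoints (since $\pi^{*H}=\pi$ and $D=D_H+\psi_H$ with $D_H$ unitary and $\psi_H$ self-adjoint) turns this into $\bar D\pi\,(\mathrm{Id}-\pi)=0$ for the conjugate connection $\bar D=D_H-\psi_H$. Combining these two relations with $D=\bar D+2\psi_H$ yields the pointwise identity
\[
D\pi \;=\; 2\,\pi\,\psi_H\,(\mathrm{Id}-\pi)\qquad\text{a.e.\ on }X,
\]
so $D\pi$ is expressed \emph{algebraically} in terms of the bounded section $\pi$ and the smooth $1$-form $\psi_H$. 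Hence $D\pi\in L^p$ for every $p$, Sobolev gives $\pi\in C^{0,\alpha}$, and then the same identity bootstraps $\pi$ to $C^\infty$ with no critical nonlinearity to worry about. In short, the full $D$-invariance (as opposed to mere $D''$-invariance) buys you a first-order equation, not a second-order one, and that is what makes the extension across $\Sigma$ elementary.
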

\begin{proof}
	Let $H$ be a Hermitian metric on $E$, $\pi$ be the projection map of $E|_{X\setminus\Sigma}$ to $S$ corresponding to $H$. Then
	\begin{equation}
		\pi^{2}=\pi^{*H}=\pi, \ \ (Id-\pi)D\pi=0 \ \ on\ X\setminus\Sigma.
	\end{equation}
Moreover, we have $\pi\in L_{1}^{2}(X,\mbox{End}(E))\cap C^{\infty}(X\setminus \Sigma,\mbox{End}(E))$, and  	\begin{equation}
	\pi^{2}=\pi^{*H}=\pi, \ \ (Id-\pi)D\pi=0\ \ a.e.\ on\ X,
\end{equation}
where $D\pi\in L^{2}(X,\mbox{End}(E))$ is the weak derivative of $\pi$. Applying the operator $D$ to the equality $\pi^{2}=\pi$ and combining with $(Id-\pi)D\pi=0$, we obtain
\begin{equation}
	D\pi\pi=0\ \ a.e.\ on\ X.
\end{equation}
Let $D=D_{H}+\psi_{H}$, where $D_{H}$ is a unitary connection and $\psi_{H}\in \Omega^{1}(\mbox{End}(E))$ is self-adjoint with respect to $H$. Set $\bar{D}=D_{H}-\psi_{H}$, then
\begin{equation}
	\bar{D}\pi(Id-\pi)=0\ \ a.e.\ on\ X.
\end{equation}
Therefore, we have
\begin{equation}\label{eq}
	\begin{split}
	D\pi=&\bar{D}\pi\pi+2[\psi_{H},\pi]\\
	=&D\pi\pi-2[\psi_{H},\pi]\pi+[\psi_{H},\pi]\\
	=&2\pi\psi_{H}(Id-\pi).
	\end{split}\ \ \ \ a.e.\ on\ X.
\end{equation}
Since $|\pi|_{H}^{2}=\mbox{rank}(S)$  almost everywhere on $X$, $\pi\in L^{p}(X,\mbox{End}(E))$ for $0<p\leq +\infty$. Then $D\pi\in L^{p}(X,\mbox{End}(E))$ for $0<p\leq +\infty$ by (\ref{eq}). Applying Sobolev's embedding theorem and the equation (\ref{eq}), we obtain that $\pi\in C^{\infty}(X,\mbox{End}(E))$. Let $\bar{S}=\pi(E)$, then the proof is complete.
\end{proof}
\begin{rem}
The above Lemma means that a $D$-invariant subsheaf can always be extended to a $D$-invariant subbundle. So our definition of $D$-simplicity of NHYM bundles coincides with Kaledin-Verbitsky's $D$-stablity in \cite{KV}.
\end{rem}

\begin{prop}\label{p:un}
	Let $\mbox{End}(E,D)$ be the endomorphism space of $(E,D)$. Assume $(E,D)$ is simple, then $\mbox{End}(E,D)\simeq \mathbb{C}$.
\end{prop}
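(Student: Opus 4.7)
The plan is to adapt the classical Schur-lemma argument, using the holomorphic structure on $E$ induced by $D^{0,1}$ (which exists thanks to $F_D^{0,2}=0$) together with Lemma~\ref{l:0} to upgrade $D$-invariant coherent subsheaves to $D$-subbundles. Note that the Einstein part of the NHYM condition plays no direct role in this statement; only the integrability $F_D^{0,2}=0$ is used.

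First I would fix $f\in\mathrm{End}(E,D)$, interpreting this condition as $Df=0$ with respect to the connection induced by $D$ on $\mathrm{End}(E)$. Splitting into types yields $D^{0,1}f=\bar{\partial}f=0$, so $f$ is a holomorphic endomorphism of $\mathcal{E}$. Consequently each function $\mathrm{tr}(f^{k})$ is a global holomorphic function on the compact K\"ahler manifold $X$, hence constant; this makes the characteristic polynomial of $f$ have constant coefficients over $X$, and in particular fixes at least one eigenvalue $\lambda\in\mathbb{C}$ of $f$.

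Next I would consider the nonzero coherent subsheaf $\mathcal{S}:=\ker(f-\lambda\,\mathrm{Id}_{E})\subset\mathcal{E}$, let $\Sigma$ denote its singular set, and let $S\subset E|_{X\setminus\Sigma}$ be the corresponding holomorphic subbundle. Its $D$-invariance is a one-line check: for any local section $s$ of $S$ the identity $f(Ds)=D(fs)-(Df)s=\lambda\,Ds$ shows that $Ds$ takes values in $S\otimes\Omega^{1}_{X}$. Lemma~\ref{l:0} then extends $S$ to a $D$-subbundle $\bar{S}\subset E$ on all of $X$. Since $\bar{S}\neq 0$ and $(E,D)$ is simple, necessarily $\bar{S}=E$, which forces $f=\lambda\,\mathrm{Id}_{E}$, and therefore $\mathrm{End}(E,D)=\mathbb{C}\cdot\mathrm{Id}_{E}\simeq\mathbb{C}$.

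No serious obstacle is anticipated: the sole genuine inputs from the NHYM hypothesis are the integrability of $D^{0,1}$ (needed so that $\mathcal{E}$ and the holomorphicity of $f$ make sense) and the availability of Lemma~\ref{l:0}. The delicate step in principle could be verifying $D$-invariance of the eigenspace sheaf on the smooth locus, but the computation is essentially tautological once $Df=0$ is written out; the only other care required is the routine observation that $\ker(f-\lambda\,\mathrm{Id}_{E})$ is coherent as the kernel of a morphism between locally free sheaves.
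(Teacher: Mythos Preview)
Your proposal is correct and follows essentially the same Schur-lemma argument as the paper: both use $D^{0,1}f=0$ to conclude that the eigenvalues of $f$ are constant, then invoke Lemma~\ref{l:0} so that the kernel of $f-\lambda\,\mathrm{Id}_E$ yields a nonzero $D$-subbundle, which by simplicity must be all of $E$. The only cosmetic difference is that the paper first packages the kernel/image argument as ``any nonzero $f\in\mathrm{End}(E,D)$ is an isomorphism'' and then applies this to $f-\lambda\,\mathrm{Id}_E$, whereas you work with the eigenspace directly.
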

\begin{proof}
	Let $f\in \mbox{End}(E,D)$. If $f\neq0$, then $f$ is isomorphism. Otherwise $\mbox{Ker}(f)$ or $\mbox{Im}(f)$ determines a proper $D$-subbundle of $(E,D)$ by Lemma \ref{l:0}. Since $f$ is holomorphic with repect to $D^{0,1}$, so the eigenvalues of $f$ are constants. Let $\lambda$ be a eigenvalue of $f$, then $\tilde{f}=f-\lambda Id_E$ is not a isomorphism. So $f=\lambda Id_E$.
\end{proof}

Let $(X,\omega)$ be a K\"ahler manifold of dimension $n$, $E$ be a complex vector bundle over $X$ with rank $r$. Given any connection $D$ and Hermitian metric $H$ on $E$, there is a unique decomposition
\begin{equation}
  D=D_H+\psi_H,
\end{equation}
where $D_H$ is a unitary connection preserving $H$ and $\psi_H \in \Omega^{1}(\mbox{End}(E))$ is self-adjoint with respect to $H$. If $D$ is a NHYM connection,  then
\begin{equation}
  \lambda Id_E=\sqrt{-1}\Lambda_{\omega}F_D=\sqrt{-1}\Lambda_{\omega}(D_H^2+\psi_H\wedge \psi_H+D_H\circ \psi_H+\psi_H\circ D_H).
\end{equation}
Considering the anti-self-adjoint and self-adjoint parts of the above identity, we have
\begin{equation}\label{SAP}
  \sqrt{-1}\Lambda_{\omega}D_H\psi_H=0,
\end{equation}
and
\begin{equation}\label{ASAP}
  \sqrt{-1}\Lambda_{\omega}(D_H^2+\psi_H\wedge \psi_H)=\lambda Id_E.
\end{equation}
Furthermore, if we consider the decomposition of $D_H$ (resp. $\psi_H$) into the $(1,0)$-part $\partial_H$ (resp. $\psi_H^{1,0}$) and $(0,1)$-part $\bar{\partial}_H$ (resp. $\psi_H^{0,1}$), we have
\begin{equation}
    D_{H}=\partial_{H}+\bar{\partial}_{H},\ \ \psi_{H}=\psi_{H}^{1,0}+\psi_{H}^{0,1}.
\end{equation}
Let
\begin{equation}
    D^{''}_{H}=\bar{\partial}_{H}+\psi_{H}^{1,0}, \ \ D_{H}^{'}=\partial_{H}+\psi_{H}^{0,1},
\end{equation}
then the pseudo-curvature $G_H$ with respect to the connection $D$ and  the Hermitian metric $H$ is defined as follows:
\begin{equation}
  G_H=(D_H^{''})^2=(\bar{\partial}_{H}+\psi_{H}^{1,0})^2.
\end{equation}
By the K\"ahler identities, we obtain
\begin{equation}
D^{*}_{H}\psi_{H}=2\sqrt{-1}\Lambda_{\omega}G_{H}.
\end{equation}
Therefore $H$ is a harmonic metric if and only if it satisfies the equation $\sqrt{-1}\Lambda_{\omega}G_{H}=0$.

Furthermore, if we consider the decompositions of the formulas (\ref{SAP}) and (\ref{ASAP}) into $(1,0)$-part and (0,1)-part, we obtain the following equations
\begin{equation}\label{eq:111}
\left\{\begin{split}
&\partial_{H}^{2}+\psi_{H}^{1,0}\wedge\psi_{H}^{1,0}=\bar{\partial}_{H}^{2}+\psi_{H}^{0,1}\wedge\psi_{H}^{0,1}=0,\\
&\partial_{H}\psi_{H}^{1,0}=\bar{\partial}_{H}\psi_{H}^{0,1}=0,\\
&\sqrt{-1}\Lambda_{\omega}(\partial_{H}\psi_{H}^{0,1}+\bar{\partial}_{H}\psi_{H}^{1,0})=0,\\
&\sqrt{-1}\Lambda_{\omega}([\partial_{H},\bar{\partial}_{H}]+[\psi_{H}^{1,0},\psi_{H}^{0,1}])=\lambda Id_{E},
\end{split}\right.
\end{equation}
where we used the NHYM condition.

For any two Hermitian metrics $K$ and $H$ on $E$, then we have
\begin{equation}
    \begin{split}
    &D_H=h^{-1}\circ D_K \circ h+\frac{1}{2}(D-h^{-1}\circ D\circ h),\\
    &\psi_{H}=h^{-1}\circ\psi_{K}\circ h+\frac{1}{2}(D-h^{-1}\circ D\circ h),
    \end{split}
\end{equation}
where $h=K^{-1}H$ is a positive endomorphism which is defined by $H(\cdot,\cdot)=K(h(\cdot),\cdot)$. If we set $D_{K}^{c}=D^{''}_{K}-D_{K}^{'}$, then a straightforward calculation shows that
\begin{equation}\label{eq:n1}
\begin{split}
    D_{H}^{''}-D^{''}_{K}=&\frac{1}{2}h^{-1}D_{K}^{c}h,\\
    \sqrt{-1}\Lambda_{\omega}G_{H}-\sqrt{-1}\Lambda_{\omega}G_{K}=&\frac{\sqrt{-1}}{4}\Lambda_{\omega}D(h^{-1}D_{K}^{c}h),
\end{split}
\end{equation}
where we used the equation (\ref{eq:111}).

\subsection{Harmonic metric on NHYM bundles}
In the following, we show that the existence of harmonic metrics implies the semisimplicity of $E$. Let $S\subset E$ be a $D$-invariant subbundle, $Q$ be the quotient bundle. Then any Hermitian metric $H$ on $E$ induces a smooth isomorphism $f_{H}:S\oplus Q\rightarrow E$. Let
\begin{equation}
    f^{*}_{H}(D)=\left(\begin{split}
    &D_{S}\ &2\beta\\
    &0\ &D_{Q}
    \end{split}\right),
\end{equation}
where $\beta\in \Omega^{1}(X,Q^{*}\otimes S)$. Then
\begin{equation}
    f^{*}_{H}(D_{H})=\left(\begin{split}
    &D_{S,H_{S}}\ &\beta\\
    &-\beta^{*}\ &D_{Q,H_{Q}}
    \end{split}\right),\ \ \ \ f^{*}_{H}(\psi_{H})=\left(\begin{split}
    &\psi_{S,H_{S}}\ &\beta\\
    &\beta^{*}\ &\psi_{Q,H_{Q}}
    \end{split}\right),
\end{equation}
and
\begin{equation}
    f^{*}_{H}(G_{H})=\left(\begin{split}
    &G_{S,H_{S}}+\beta\wedge(\beta^{c})^{*}\ & D^{''}\beta\\
    &D^{''}(\beta^{c})^{*}\ &G_{Q,H_{Q}}+(\beta^{c})^{*}\wedge\beta^{c}
    \end{split}\right),
\end{equation}
where $\beta^{c}=\beta^{0,1}-\beta^{1,0}$. Taking the trace and the integral on both sides, we see
\begin{equation}
\int_{X}(\sqrt{-1}\Lambda_{\omega}\tr(G_{S,H_{S}})-|\beta|^{2})\frac{\omega^{n}}{n!}=0.
\end{equation}
On the other hand, by the Stokes' formula, we have
\begin{equation}
\int_{X}\sqrt{-1}\Lambda_{\omega}\tr(G_{S,H_{S}})\frac{\omega^{n}}{n!}=0.
\end{equation}
Therefore, we have
\begin{equation}\label{eee}
\beta=0,
\end{equation}
which means that $(E,D)\simeq (S,D_{S})\oplus(Q,D_{Q})$. Thus  we obtain that $(E,D)$ is semisimple by induction on the rank.

\section{Donaldson's functional on NHYM bundles}\label{sec:DF}
In this section, we wil introduce the Donaldson's functional by following Donaldson (\cite{Don2}) and Simpson (\cite{S1}) in our situation.
For any two Hermitian metrics $K,L$, we now define the Donaldson's functional on NHYM bundles as follows
\begin{equation}
  \mathcal{M}(K,L)=\int_{0}^1\int_X{\rm tr}\bigg(-4\sqrt{-1}\Lambda_{\omega}G_{H(s)}\cdot H^{-1}(s)\frac{\partial H(s)}{\partial s}\bigg)\frac{\omega^n}{n!}ds,
\end{equation}
where $H(s)$ is a path connecting the Hermitian metrics $K$ and $L$ in the space of all metrics on $E$.

\begin{prop}\label{DFP1}
  The Donaldson's functional $\mathcal{M}$ is independent of the path.
\end{prop}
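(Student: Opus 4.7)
The strategy is to view the integrand of $\mathcal{M}$ as the pull-back along $s\mapsto H(s)$ of a natural $1$-form
\[
  \eta_H(v)=\int_X\tr\bigl(-4\sqrt{-1}\Lambda_\omega G_H\cdot v\bigr)\frac{\omega^n}{n!}
\]
on the (contractible) space of Hermitian metrics on $E$; path-independence is then equivalent to the closedness $d\eta=0$. Concretely, I would connect any two paths from $K$ to $L$ by a smooth two-parameter family $H(s,t)$, $s,t\in[0,1]$, with $H(0,t)\equiv K$ and $H(1,t)\equiv L$, and show $\tfrac{d}{dt}\mathcal{M}(K,L;H(\cdot,t))\equiv 0$.

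Set $v=H^{-1}\partial_s H$ and $w=H^{-1}\partial_t H$. The boundary conditions force $w|_{s=0}=w|_{s=1}=0$, and a direct computation gives $\partial_t v-\partial_s w=[v,w]$. Differentiating the variation formula (\ref{eq:n1}) at the base metric $H$ itself (so that $h=Id$, $\partial_t h=w$) yields
\[
  \partial_t(\sqrt{-1}\Lambda_\omega G_H)=\tfrac{\sqrt{-1}}{4}\Lambda_\omega D(D_H^c w).
\]
Substituting these into $\tfrac{d}{dt}\mathcal{M}$ and using Stokes' theorem on the compact K\"ahler manifold $X$ to integrate by parts in the operator $D$ (moving it off $D_H^c w$ and onto $v$), the goal is to recognize the $\tfrac{d}{dt}\mathcal{M}$-integrand as $\partial_s\!\left(\tr(-4\sqrt{-1}\Lambda_\omega G_H\cdot w)\right)\omega^n/n!$; integrating in $s$ and invoking $w|_{s=0,1}=0$ then yields $\tfrac{d}{dt}\mathcal{M}=0$.

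The main obstacle will be establishing the symmetry identity
\[
  \int_X\tr\bigl(\Lambda_\omega D(D_H^c w)\cdot v\bigr)\frac{\omega^n}{n!}=\int_X\tr\bigl(\Lambda_\omega D(D_H^c v)\cdot w\bigr)\frac{\omega^n}{n!}\pmod{[v,w]\text{-term}}
\]
that is needed to repackage things as an $s$-derivative. This is precisely where the NHYM conditions enter in an essential way: the identities $\partial_H\psi_H^{1,0}=\bar\partial_H\psi_H^{0,1}=0$ together with $\sqrt{-1}\Lambda_\omega(\partial_H\psi_H^{0,1}+\bar\partial_H\psi_H^{1,0})=0$ from (\ref{eq:111}) are exactly what allow the second-order operator $\sqrt{-1}\Lambda_\omega D\,D_H^c$ on $\mbox{End}(E)$ to be ``swapped'' between its argument and the test endomorphism under the trace--integral pairing. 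Without these identities the symmetry would fail; this is also the step where the argument genuinely differs from the flat ($F_D=0$) case handled by Donaldson and Simpson.
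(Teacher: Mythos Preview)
Your proposal is correct and follows essentially the same route as the paper: both take a two-parameter family of metrics, compute the $t$-variation of $\sqrt{-1}\Lambda_\omega G_H$ via (\ref{eq:n1}), and reduce $\tfrac{d}{dt}\mathcal{M}$ to an exact term using Stokes' formula together with the NHYM identities. The paper packages your ``symmetry identity'' slightly differently, writing it as the single curvature relation $\sqrt{-1}\Lambda_\omega G_H=\tfrac{\sqrt{-1}}{4}\Lambda_\omega[D,D_H^c]$ (which is exactly the consequence of (\ref{eq:111}) you invoke), and then checking directly that $\partial_t\tr(\sqrt{-1}\Lambda_\omega G_H\cdot v)-\partial_s\tr(\sqrt{-1}\Lambda_\omega G_H\cdot w)$ is a sum of $d$- and $d^c$-exact terms; but the content is the same.
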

\begin{proof}
  Let $H(\tau,s),s\in[0,1]$ be a family of path with $\tau\in (-\varepsilon,\varepsilon)$ such that $H(\tau,0)=K, H(\tau,1)=L$. If we denote $h(\tau,s)=K^{-1}H(\tau,s)$, then $h(\tau,0)={\rm Id}, h(\tau,1)=K^{-1}L$. For writing convenience, we omit the parameters here. By direct calculation, we have
  \begin{equation}\label{DFC1}
    \frac{\partial }{\partial \tau}\bigg(\sqrt{-1}\Lambda_{\omega}G_H\bigg)=\frac{\sqrt{-1}}{4}\Lambda_{\omega}DD_H^c\bigg(h^{-1}\frac{\partial h}{\partial \tau}\bigg).
  \end{equation}
  Since $D$ is a NHYM connection, we also have
  \begin{equation}\label{DFC2}
    \sqrt{-1}\Lambda_{\omega}G_H=\frac{\sqrt{-1}}{4}\Lambda_{\omega}[D,D_{H}^c].
  \end{equation}
  By using (\ref{DFC1}) and (\ref{DFC2}), we can easily obtain that
  \begin{equation}\label{DFC3}
    \begin{split}
      &\quad\frac{\partial}{\partial \tau}{\rm tr}\bigg(\sqrt{-1}\Lambda_{\omega}G_H \cdot h^{-1}\frac{\partial h}{\partial s}\bigg)-\frac{\partial}{\partial s}{\rm tr}\bigg(\sqrt{-1}\Lambda_{\omega}G_H \cdot h^{-1}\frac{\partial h}{\partial \tau}\bigg)\\
      &={\rm tr}\bigg(\frac{\sqrt{-1}}{4}\Lambda_{\omega}DD_H^c\big(h^{-1}\frac{\partial h}{\partial \tau}\big)h^{-1}\frac{\partial h}{\partial s}-\frac{\sqrt{-1}}{4}\Lambda_{\omega}DD_H^c\big(h^{-1}\frac{\partial h}{\partial s}\big)h^{-1}\frac{\partial h}{\partial \tau}\bigg)\\
      &\quad +{\rm tr}\bigg(-\sqrt{-1}\Lambda_{\omega}G_H\cdot h^{-1}\frac{\partial h}{\partial \tau}h^{-1}\frac{\partial h}{\partial s}+\sqrt{-1}\Lambda_{\omega}G_H\cdot h^{-1}\frac{\partial h}{\partial s}h^{-1}\frac{\partial h}{\partial \tau}\bigg)\\
      &={\rm tr}\bigg(\frac{\sqrt{-1}}{4}\Lambda_{\omega}DD_H^c\big(h^{-1}\frac{\partial h}{\partial \tau}\big)h^{-1}\frac{\partial h}{\partial s}-\frac{\sqrt{-1}}{4}\Lambda_{\omega}DD_H^c\big(h^{-1}\frac{\partial h}{\partial s}\big)h^{-1}\frac{\partial h}{\partial \tau}\bigg)\\
      &\quad +{\rm tr}\bigg(\frac{\sqrt{-1}}{4}\Lambda_{\omega}[D,D^c_H]\big(h^{-1}\frac{\partial h}{\partial s}\big)h^{-1}\frac{\partial h}{\partial \tau}\bigg)\\
      &=\frac{\sqrt{-1}}{4}\Lambda_{\omega}{\rm tr}\bigg(DD_H^c\big(h^{-1}\frac{\partial h}{\partial \tau}\big)h^{-1}\frac{\partial h}{\partial s}+D_H^cD\big(h^{-1}\frac{\partial h}{\partial s}\big)h^{-1}\frac{\partial h}{\partial \tau}\bigg)\\
      &=\frac{\sqrt{-1}}{4}\Lambda_{\omega}d{\rm tr}\bigg(D^c_H\big(h^{-1}\frac{\partial h}{\partial \tau}\big)h^{-1}\frac{\partial h}{\partial s}\bigg)+\frac{\sqrt{-1}}{4}\Lambda_{\omega}d^c{\rm tr}\bigg(D\big(h^{-1}\frac{\partial h}{\partial s}\big)h^{-1}\frac{\partial h}{\partial \tau}\bigg).
    \end{split}
  \end{equation}
  Using the Stokes' formula together with (\ref{DFC3}) yields that
  \begin{equation}
    \int_X\frac{\partial}{\partial \tau}{\rm tr}\bigg(\sqrt{-1}\Lambda_{\omega}G_H \cdot h^{-1}\frac{\partial h}{\partial s}\bigg)-\frac{\partial}{\partial s}{\rm tr}\bigg(\sqrt{-1}\Lambda_{\omega}G_H \cdot h^{-1}\frac{\partial h}{\partial \tau}\bigg)\frac{\omega^n}{n!}=0.
  \end{equation}
  Hence,
  \begin{equation*}
    \begin{split}
      &\quad \frac{d}{d\tau}\int_0^1\int_X{\rm tr}\bigg(-4\sqrt{-1}\Lambda_{\omega}G_{H(\tau,s)}\cdot H^{-1}(\tau,s)\frac{\partial H(\tau,s)}{\partial s}\bigg)\frac{\omega^n}{n!}ds\\
      &=\int_0^{1}\int_X\frac{\partial}{\partial \tau}\tr\bigg(-4\sqrt{-1}\Lambda_{\omega}G_{H(\tau,s)}\cdot H^{-1}(\tau,s)\frac{\partial H(\tau,s)}{\partial s}\bigg)\frac{\omega^n}{n!}ds\\
      &=\int_0^{1}\int_X\frac{\partial}{\partial s}\tr\bigg(-4\sqrt{-1}\Lambda_{\omega}G_{H(\tau,s)}\cdot H^{-1}(\tau,s)\frac{\partial H(\tau,s)}{\partial \tau}\bigg)\frac{\omega^n}{n!}ds\\
      &=\int_X{\rm tr}\bigg(-4\sqrt{-1}\Lambda_{\omega}G_{H(\tau,s)}\cdot H^{-1}(\tau,s)\frac{\partial H(\tau,s)}{\partial \tau}\bigg)\frac{\omega^n}{n!}\bigg|_{s=0}^1\\
      &=0.
    \end{split}
  \end{equation*}
  This means that the Donaldson's functional is independent of the path.
\end{proof}

Let $s$ be the endomorphism of $E$ determined by the condition $L=Ke^s$. Choosing a special path $H(t)=Ke^{ts}$ connecting $K$ and $L$, then the Donaldson's functional can be written as follows
\begin{equation}
  \mathcal{M}(K,L)=\int_0^1\int_{X}{\rm tr}\bigg(\big(-4\sqrt{-1}\Lambda_{\omega}G_{H(t)}\big)s\bigg)\frac{\omega^n}{n!}dt.
\end{equation}
Set
\begin{equation*}
  \Psi(x,y):=
  \frac{e^{y-x}-(y-x)-1}{(y-x)^2}.
\end{equation*}
Then we have the following Proposition.
\begin{prop}Let $L=Ke^s$, then we have
\begin{equation}\label{DFkey}
  \mathcal{M}(K,L)=\int_{X}{\rm tr}\bigg(\big(-4\sqrt{-1}\Lambda_{\omega}G_{K}\big)s\bigg)\frac{\omega^n}{n!}+\int_{X}\langle\Psi(s)(Ds),Ds\rangle_{K}
  \frac{\omega^n}{n!}.
\end{equation}
\end{prop}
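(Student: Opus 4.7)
The strategy is to specialize the path-integral defining $\mathcal{M}(K,L)$ to the straight line $H(t)=Ke^{ts}$, $t\in[0,1]$. Along this path, $h(t):=K^{-1}H(t)=e^{ts}$, and since $s$ commutes with $e^{ts}$ one has $H(t)^{-1}\partial_t H(t)=s$. Combined with the path-independence from Proposition \ref{DFP1}, this gives
\[
\mathcal{M}(K,L)=\int_0^1 F(t)\,dt,\qquad F(t):=\int_X \tr\bigl(-4\sqrt{-1}\Lambda_\omega G_{H(t)}\cdot s\bigr)\,\frac{\omega^n}{n!}.
\]
The constant contribution $F(0)$ is precisely the first term of the claimed formula, so the task reduces to proving that $\int_0^1(F(t)-F(0))\,dt=\int_X\langle\Psi(s)(Ds),Ds\rangle_K\,\frac{\omega^n}{n!}$.

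For the difference, I apply (\ref{eq:n1}) to write $\sqrt{-1}\Lambda_\omega(G_{H(t)}-G_K)=\tfrac{\sqrt{-1}}{4}\Lambda_\omega D(h(t)^{-1}D_K^c h(t))$. Since $\tr\circ D=d\circ\tr$ on $\tend(E)$-valued forms and $d\omega=0$ on a compact closed $X$, Stokes yields
\[
F(t)-F(0)=-\sqrt{-1}\int_X \Lambda_\omega\tr\bigl(h(t)^{-1}D_K^c h(t)\wedge Ds\bigr)\,\frac{\omega^n}{n!}.
\]
At each point, diagonalize $s$ in a $K$-orthonormal frame with eigenvalues $\{\lambda_i\}$. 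Because $D_K^c$ is a graded derivation on $\tend(E)$, the Duhamel expansion $D_K^c e^{ts}=t\int_0^1 e^{uts}(D_K^c s)e^{(1-u)ts}\,du$ gives $(h(t)^{-1}D_K^c h(t))_{ij}=\frac{e^{t(\lambda_j-\lambda_i)}-1}{\lambda_j-\lambda_i}(D_K^c s)_{ij}$; integrating in $t$ via $\int_0^1\tfrac{e^{t(y-x)}-1}{y-x}\,dt=\Psi(x,y)$ then yields $\int_0^1 h(t)^{-1}D_K^c h(t)\,dt = \Psi(s)(D_K^c s)$. Hence
\[
\int_0^1(F(t)-F(0))\,dt=-\sqrt{-1}\int_X \Lambda_\omega\tr\bigl(\Psi(s)(D_K^c s)\wedge Ds\bigr)\,\frac{\omega^n}{n!}.
\]

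The last and most delicate step is the K\"ahler-type identity
\[
-\sqrt{-1}\Lambda_\omega\tr\bigl(\Psi(s)(D_K^c s)\wedge Ds\bigr)=\langle\Psi(s)(Ds),Ds\rangle_K
\]
(at least after integration over $X$). To establish it, decompose $D_K^c s$ and $Ds$ into their $(1,0)$ and $(0,1)$ parts and use the adjoint relations $(\partial_K s)^*=\bar\partial_K s$ and $(\psi_K^{1,0})^*=\psi_K^{0,1}$ that follow from the self-adjointness of $s$ and $\psi_K$. In the eigenbasis of $s$ the identity reduces mode-by-mode to the standard pointwise $\sqrt{-1}\Lambda_\omega(\alpha\wedge\bar\alpha)=|\alpha|^2$ for scalar 1-forms. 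The genuinely new feature is the interplay between the asymmetric factor $\Psi(\lambda_i,\lambda_j)$ and the cross-terms mixing the $D_K$- and $\psi_K$-contributions of $Ds$; handling these cross-terms (probably by invoking the NHYM structure equations $\partial_K\psi_K^{1,0}=\bar\partial_K\psi_K^{0,1}=0$ from (\ref{eq:111}), possibly combined with another Stokes-type integration) is the main technical obstacle. Once the identity is in place, it combines with the Duhamel step of the previous paragraph to deliver the claimed formula.
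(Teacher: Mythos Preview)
Your route differs from the paper's, and the place where you stop is a real (though easily closable) gap.

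The paper does not use Duhamel at all. It differentiates once more: from $F(t)=\int_X\tr(-4\sqrt{-1}\Lambda_\omega G_{H(t)}\cdot s)$ it computes, via (\ref{DFC1}) and one Stokes,
\[
F'(t)=\int_X|Ds|^2_{H(t),\omega}\,\frac{\omega^n}{n!},
\]
using the elementary identity $-\sqrt{-1}\Lambda_\omega\tr(D_{H(t)}^c s\wedge Ds)=|Ds|^2_{H(t),\omega}$. Since in a $K$-orthonormal eigenframe of $s$ one has $|Ds|^2_{H(t)}=\sum_{i,j}|(Ds)^j_i|^2e^{t(\lambda_j-\lambda_i)}$, two elementary integrations in $t$ yield (\ref{DFkey}) directly. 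Passing to the second derivative is the key simplification: the running metric $H(t)$ makes the K\"ahler identity clean and symmetric, so no $\Psi$-weighted identity is ever needed.

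Your last step, by contrast, asks for the weighted identity $-\sqrt{-1}\Lambda_\omega\tr(\Psi(s)(D_K^c s)\wedge Ds)=\langle\Psi(s)(Ds),Ds\rangle_K$, and you leave it open, suspecting that the ``cross-terms mixing the $D_K$- and $\psi_K$-contributions'' require the NHYM relations (\ref{eq:111}) and perhaps another Stokes. That diagnosis is off. Write, in a $K$-orthonormal eigenframe of $s$, $a=(\partial_Ks)^j_i$, $b=(\bar\partial_Ks)^j_i$, $c=[\psi_K^{1,0},s]^j_i$, $d=[\psi_K^{0,1},s]^j_i$. Self-adjointness of $s$ and $\psi_K$ with respect to $K$ gives $(Ds)^i_j=\bar a+\bar b-\bar c-\bar d$, and the cross-type contributions $(2,0)$ and $(0,2)$ are killed by $\Lambda_\omega$ by pure type reasons---no NHYM input needed. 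What remains is
\[
-\sqrt{-1}\Lambda_\omega\bigl[(D_K^c s)^j_i\wedge(Ds)^i_j\bigr]=|a-c|^2+|b-d|^2=|(Ds)^i_j|^2,
\]
so the weight attached to the $(j,i)$ summand on the left is paired with $|(Ds)^i_j|^2$, not $|(Ds)^j_i|^2$. Since your Duhamel step produces the weight $\Psi(\lambda_j,\lambda_i)$ on $(D_K^c s)^j_i$, relabelling $i\leftrightarrow j$ gives exactly $\sum_{i,j}\Psi(\lambda_i,\lambda_j)|(Ds)^j_i|^2=\langle\Psi(s)(Ds),Ds\rangle_K$. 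Thus your identity holds pointwise, and the only subtlety is this index swap forced by the asymmetry $\Psi(x,y)\neq\Psi(y,x)$---not any structure equation of the connection.
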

\begin{proof}
  By directly calculation, we deduce
  \begin{equation}
    \frac{\partial}{\partial t}\mathcal{M}(K,H(t))=\int_{X}{\rm tr}\bigg(\big(-4\sqrt{-1}\Lambda_{\omega}G_{H(t)}\big)s\bigg)\frac{\omega^n}{n!}
  \end{equation}
  and
  \begin{equation}\label{DFC4}
  \begin{split}
    &\quad \frac{\partial^2}{\partial t^2}\mathcal{M}(K,H(t))=\int_X{\rm tr}\bigg(\big(-\sqrt{-1}\Lambda_{\omega}DD_{H(t)}^cs\big)s\bigg)\frac{\omega^{n}}{n!}\\
    &=\int_{X}-\sqrt{-1}\Lambda_{\omega}d{\rm tr}(D_{H(t)}^cs\cdot s)\frac{\omega^{n}}{n!}-\int_{X}\sqrt{-1}\Lambda_{\omega}{\rm tr}\big(D_{H(t)}^cs\wedge Ds\big)\frac{\omega^n}{n!}\\
    &=\int_{X}|Ds|_{H(t),\omega}^2\frac{\omega^n}{n!},
    \end{split}
  \end{equation}
  where we used $\sqrt{-1}\Lambda_{\omega}{\rm tr}(D_H^cs \wedge Ds)=|Ds|_{H,\omega}^2$. Choosing a orthonormal basis $\{e_1,\cdots,e_r\}$ with respect to the metric $K$, such that $s=\sum_{i=1}^{r}\lambda_{i}e_{i}\otimes e_{i}^{*}$. Let $Ds=\sum_{i,j=1}^{r}(Ds)_{i}^{j}e_{j}\otimes e_{i}^{*}$, then we have
  \begin{equation}\label{DFC5}
    |Ds|_{H(t),\omega}^2=\sum_{i,j=1}^{r}|(Ds)_i^j|_{\omega}^2e^{t(\lambda_j-\lambda_i)}.
  \end{equation}
  Using (\ref{DFC5}) and integrating (\ref{DFC4}) from $0$ to $t$ yields
  \begin{equation}
    \frac{\partial}{\partial t}\mathcal{M}(K,H(t))=\sum_{i,j=1}^{r}\int_X|(Ds)_i^j|_{\omega}^2\frac{e^{t(\lambda_j-\lambda_i)}-1}{(\lambda_j-\lambda_i)}
    \frac{\omega^n}{n!}+\int_X{\rm tr}\bigg
    (\big(-4\sqrt{-1}\Lambda_{\omega}G_{K}\big)s\bigg)\frac{\omega^n}{n!}.
  \end{equation}
  Hence,
  \begin{equation*}
    \begin{split}
      &\quad \mathcal{M}(K,L)=\int_{0}^1\frac{\partial}{\partial t}\mathcal{M}(K,H(t))dt\\
      &=\int_{X}{\rm tr}\bigg(\big(-4\sqrt{-1}\Lambda_{\omega}G_{K}\big)s\bigg)\frac{\omega^n}{n!}+\sum_{i,j=1}^{r}\int_X|(Ds)_i^j|_{\omega}^2
      \Psi(\lambda_{i},\lambda_{j})\frac{\omega^n}{n!}\\
      &=\int_{X}{\rm tr}\bigg(\big(-4\sqrt{-1}\Lambda_{\omega}G_{K}\big)s\bigg)\frac{\omega^n}{n!}+\int_{X}\langle\Psi(s)(Ds),Ds\rangle_{K}
  \frac{\omega^n}{n!}.
    \end{split}
  \end{equation*}
\end{proof}

The following property are easily derived from the definition of  the Donaldson's functional.
\begin{prop}
	Let $K$, $J$ and $L$ be three Hermitian metrics on $(E,D)$, then
	\begin{equation}
		\mathcal{M}(K,J)+\mathcal{M}(J,L)=\mathcal{M}(K,L).
	\end{equation}
\end{prop}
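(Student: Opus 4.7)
My plan is to exploit the path-independence of $\mathcal{M}$ established in Proposition~\ref{DFP1}: since the value of $\mathcal{M}(K,L)$ depends only on the endpoints, I may compute it along any convenient path from $K$ to $L$, and the obvious candidate is a path that passes through the intermediate metric $J$.

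Concretely, I would fix a smooth path $H_{1}:[0,1]\to\{\text{Hermitian metrics on }E\}$ with $H_{1}(0)=K$, $H_{1}(1)=J$, and a smooth path $H_{2}:[0,1]\to\{\text{Hermitian metrics on }E\}$ with $H_{2}(0)=J$, $H_{2}(1)=L$. I would then splice these into a single path $H:[0,1]\to\{\text{metrics}\}$ by setting $H(s)=H_{1}(\phi(2s))$ for $s\in[0,1/2]$ and $H(s)=H_{2}(\phi(2s-1))$ for $s\in[1/2,1]$, where $\phi:[0,1]\to[0,1]$ is any smooth reparametrization fixing the endpoints with all derivatives vanishing there (so that the concatenation is $C^{\infty}$ at the gluing point $s=1/2$, where $H=J$). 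Applying the definition of $\mathcal{M}$ to the path $H$ gives
\begin{equation*}
\mathcal{M}(K,L)=\int_{0}^{1/2}\!\!\int_{X}\tr\!\Bigl(-4\sqrt{-1}\Lambda_{\omega}G_{H(s)}\cdot H^{-1}(s)\tfrac{\partial H(s)}{\partial s}\Bigr)\tfrac{\omega^{n}}{n!}\,ds+\int_{1/2}^{1}\!\!\int_{X}\tr\!\Bigl(-4\sqrt{-1}\Lambda_{\omega}G_{H(s)}\cdot H^{-1}(s)\tfrac{\partial H(s)}{\partial s}\Bigr)\tfrac{\omega^{n}}{n!}\,ds.
\end{equation*}

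A change of variable $\tau=\phi(2s)$ on $[0,1/2]$ (resp.\ $\tau=\phi(2s-1)$ on $[1/2,1]$) turns each of the two pieces into the defining integral of $\mathcal{M}(K,J)$ along $H_{1}$ (resp.\ of $\mathcal{M}(J,L)$ along $H_{2}$); indeed, the $\phi'$ factor from $d\tau$ exactly cancels the inverse factor coming from $\partial H/\partial s = \phi'\cdot \partial H_{i}/\partial\tau$, which reflects the fact that the integrand is really the pairing of a one-form on the space of metrics with the tangent vector to the path, hence reparametrization invariant. Invoking path-independence once more to identify these with $\mathcal{M}(K,J)$ and $\mathcal{M}(J,L)$ yields the claim.

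There is essentially no obstacle here; the only bookkeeping point is the smoothness of the concatenation at $s=1/2$, and that is handled by the standard choice of flat reparametrization $\phi$. The proof is really just the additivity of a line integral of a one-form on the (contractible, affine) space of Hermitian metrics with respect to path concatenation, together with Proposition~\ref{DFP1}.
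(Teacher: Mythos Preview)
Your argument is correct and is precisely the approach the paper has in mind: it states only that this property is ``easily derived from the definition of the Donaldson's functional,'' which, given Proposition~\ref{DFP1}, amounts exactly to the path-concatenation/reparametrization argument you have written out.
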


\section{The heat flow on NHYM bundle}\label{sec:FL}
In this section, we consider the following heat flow on the NHYM bundle $(E,D)$ over compact K\"ahler manifolds:
\begin{equation}\label{HFeq1}
    H^{-1}(t)\frac{\partial H(t)}{\partial t}=4\sqrt{-1}\Lambda_{\omega}G_{H(t)}.
\end{equation}
For convenience, we set $\Phi(H)=4\sqrt{-1}\Lambda_{\omega}G_{H}$. It is easy to see  that $\Phi(H)$ is self-adjoint with respect to $H$. Next we will give some basic properties about this flow.
\begin{prop}\label{HFP1}
  Let $H(t)$ be a solution  of the heat flow (\ref{HFeq1}) with initial metric $H_{0}$, then
  \begin{equation}\label{HFC1}
    \bigg(\frac{\partial }{\partial t}-\Delta\bigg){\rm tr}\big(\Phi(H(t))\big)=0,
  \end{equation}
  and
  \begin{equation}\label{HFC2}
    \bigg(\frac{\partial }{\partial t}-\Delta\bigg)|\Phi(H(t))|_{H(t)}^{2}=-2|D\Phi(H(t))|_{H(t),\omega}^2.
  \end{equation}
\end{prop}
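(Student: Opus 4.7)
The plan is to first derive a pointwise evolution equation for $\Phi(H(t))$ itself, and then obtain both identities by taking appropriate traces. By the variational formula (\ref{eq:n1}), specialized exactly as in (\ref{DFC1}), for any smooth family $H(\tau)$ of Hermitian metrics one has
\begin{equation*}
\frac{\partial}{\partial\tau}\bigl(\sqrt{-1}\Lambda_{\omega}G_{H(\tau)}\bigr)=\frac{\sqrt{-1}}{4}\Lambda_{\omega}DD_{H(\tau)}^{c}\bigl(H(\tau)^{-1}\partial_{\tau}H(\tau)\bigr).
\end{equation*}
Substituting the heat flow equation (\ref{HFeq1}), namely $H(t)^{-1}\partial_{t}H(t)=\Phi(H(t))$, and multiplying by $4$, I obtain the key evolution equation
\begin{equation*}
\frac{\partial\Phi(H(t))}{\partial t}=\sqrt{-1}\Lambda_{\omega}DD_{H(t)}^{c}\Phi(H(t)).
\end{equation*}

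For (\ref{HFC1}), I take the trace of this evolution equation. The operators $D$ and $D_{H}^{c}$ both differ from a metric connection only by commutators with matrix-valued one-forms (namely $\psi_{H}$ and $\psi_{H}^{1,0}-\psi_{H}^{0,1}$), and commutators of endomorphisms are traceless. Therefore $\tr(D_{H}^{c}\Phi)=d^{c}\tr\Phi$ with $d^{c}:=\bar{\partial}-\partial$, and consequently $\tr(DD_{H}^{c}\Phi)=dd^{c}\tr\Phi$. Adopting the natural K\"ahler convention $\Delta:=\sqrt{-1}\Lambda_{\omega}dd^{c}$ on functions, I conclude $\partial_{t}\tr\Phi=\Delta\tr\Phi$.

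For (\ref{HFC2}), I exploit that $\Phi(H)$ is $H$-self-adjoint all along the flow, so pointwise $|\Phi|_{H}^{2}=\tr(\Phi^{2})$ is independent of the metric; differentiating in $t$ yields $\partial_{t}|\Phi|_{H}^{2}=2\tr(\Phi\cdot\partial_{t}\Phi)$. On the other hand, two applications of the Leibniz rule (exactly as in the derivation of (\ref{DFC4})) give
\begin{equation*}
dd^{c}\tr(\Phi^{2})=2\tr\bigl(DD_{H}^{c}\Phi\cdot\Phi\bigr)-2\tr\bigl(D_{H}^{c}\Phi\wedge D\Phi\bigr).
\end{equation*}
Applying $\sqrt{-1}\Lambda_{\omega}$, inserting the evolution equation for $\Phi$ into the first term on the right, and invoking the pointwise algebraic identity $-\sqrt{-1}\Lambda_{\omega}\tr(D_{H}^{c}\Phi\wedge D\Phi)=|D\Phi|_{H,\omega}^{2}$ (the same identity that underlies the last step in the proof of (\ref{DFkey})), I obtain exactly (\ref{HFC2}).

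The main obstacle is establishing the pointwise evolution equation for $\Phi$ cleanly; its derivation relies crucially on the NHYM conditions (\ref{eq:111}), which are precisely what make $\sqrt{-1}\Lambda_{\omega}(G_{H}-G_{K})$ a total derivative of the form $\frac{\sqrt{-1}}{4}\Lambda_{\omega}D(h^{-1}D_{K}^{c}h)$ and, more generally, make $D$ and $D_{H}^{c}$ behave like a pair of Dolbeault-like operators under $\sqrt{-1}\Lambda_{\omega}$. Once this evolution equation is in hand, the remainder of the argument is routine Bochner-type bookkeeping; the $H$-self-adjointness of $\Phi$ is the convenient feature that removes what would otherwise be a metric-variation contribution to $\partial_{t}|\Phi|_{H}^{2}$, so that no extra terms appear on the right-hand side of either (\ref{HFC1}) or (\ref{HFC2}).
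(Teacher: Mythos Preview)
Your proposal is correct and follows essentially the same route as the paper: both first establish the evolution identity $\partial_{t}\Phi(H)=\sqrt{-1}\Lambda_{\omega}DD_{H}^{c}\Phi(H)$ from the variational formula (\ref{eq:n1}) (the paper rederives it as (\ref{HFC3}) rather than citing (\ref{DFC1}), but the computation is the same), then take the trace for (\ref{HFC1}) and perform the same Bochner-type expansion of $\sqrt{-1}\Lambda_{\omega}dd^{c}\tr(\Phi^{2})$ for (\ref{HFC2}). Your explicit remark that $|\Phi|_{H}^{2}=\tr(\Phi^{2})$ eliminates the metric-variation term is a nice clarification, but otherwise the arguments coincide.
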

\begin{proof}
  Set $h(t)=H_0^{-1}H(t)$. We omit the parameter $t$ in the computations when there is no confusion.  Making use of identity (\ref{eq:n1}),
  we have
  \begin{equation}\label{HFC3}
  \begin{split}
    \frac{\partial}{\partial t}\Phi(H)&=\frac{\sqrt{-1}}{4}\Lambda_{\omega}D\bigg(-h^{-1}\frac{\partial h}{\partial t}h^{-1}D_{H_0}^ch+h^{-1}D_{H_0}^c\frac{\partial h}{\partial t}\bigg)\\
    &=\frac{\sqrt{-1}}{4}\Lambda_{\omega}D\bigg(-h^{-1}\frac{\partial h}{\partial t}h^{-1}D_{H_0}^ch+h^{-1}D_{H_0}^ch h^{-1}\frac{\partial h}{\partial t}+D_{H_0}^c\big(h^{-1}\frac{\partial h}{\partial t}\big)\bigg)\\
    &=\frac{\sqrt{-1}}{4}\Lambda_{\omega}D\bigg([h^{-1}D_{H_0}^ch,h^{-1}\frac{\partial h}{\partial t}]+D_{H_0}^c\big(h^{-1}\frac{\partial h}{\partial t}\big)\bigg)\\
    &=\sqrt{-1}\Lambda_{\omega}DD_{H}^c\Phi(H).
    \end{split}
  \end{equation}
  The formula (\ref{HFC1}) can be easily deduced form (\ref{HFC3}). On the other hand, we have
  \begin{equation}\label{HFC4}
  \begin{split}
    \Delta|\Phi(H)|_{H}^2&=\sqrt{-1}\Lambda_{\omega}dd^c|\Phi(H)|_{H}^2=\sqrt{-1}\Lambda_{\omega}dd^c{\rm tr}(\Phi(H)\Phi(H))\\
    &=\sqrt{-1}\Lambda_{\omega}d\{{\rm tr} (D_H^c\Phi(H)\cdot \Phi(H)+\Phi(H)\cdot D_H^c\Phi(H))\}\\
    &=\sqrt{-1}\Lambda_{\omega}{\rm tr}\left\{DD_H^c\Phi(H)\cdot\Phi(H)-D_H^c\Phi(H)\wedge D\Phi(H)\right.\\
    &\quad +\left. D\Phi(H)\wedge D_{H}^c\Phi(H)+\Phi(H)DD_{H}^c\Phi(H)\right\}\\
    &=2\sqrt{-1}\Lambda_{\omega}{\rm tr}(\Phi(H)DD_H^c\Phi(H))+2|D\Phi(H)|_H^2
    \end{split}
  \end{equation}
  Combining (\ref{HFC3}) and (\ref{HFC4}) yields
  \begin{equation*}
    \begin{split}
      \bigg(\frac{\partial}{\partial t}-\Delta\bigg)|\Phi(H)|_H^2&=2{\rm tr}(\frac{\partial}{\partial t}\Phi(H)\Phi(H))-\Delta|\Phi(H)|_H^2\\
      &=-2|D\Phi(H)|_H^2.
    \end{split}
  \end{equation*}
\end{proof}
By Proposition \ref{HFP1} and the maximum principle, we can easily deduce that
\begin{equation}\label{eq:n2}
	\sup_{X}|\Phi(H(t))|_{H(t)}\leq \sup_{X}|\Phi(H_{0})|_{H_{0}}.
\end{equation}
If we take the initial metric $H_0$ such that $\tr(\Phi(H_{0}))=0$, then $\tr(\Phi(H(t)))=0$. Combining with the equation (\ref{HFeq1}), it can be derived that
\begin{equation}
\log\det (H_{0}^{-1}H(t))=\tr(\log(H_{0}^{-1}H(t)))=0.
\end{equation}

Following Donaldson's argument (\cite{Don2}), we introduce the Donaldson's distance on the space of the Hermitian metrics as follows.
\begin{defn}
  Given any two Hermitian metrics $K$ and $H$ on the vector bundle $E$, the Donaldson's distance is defined by
  \begin{equation}
    \sigma(K,H)={\rm tr}(K^{-1}H)+{\rm tr}(H^{-1}K)-2{\rm rank }E.
  \end{equation}
\end{defn}
It is obvious that $\sigma(K,H)\geq 0$ with the equality holds if and only if $K=H$. And  a sequence of Hermitian metrics $H_i$ converge to some $H$ in the usual $C^0$-topology if and only if
\begin{equation*}
  \lim_{i\to \infty}\sup_{X}\sigma(H_i,H)=0.
\end{equation*}
\begin{prop}\label{HFP2}
  Let $H$ and $K$ be two harmonic metrics on the NHYM bundle $(E,D)$, then we have
  \begin{equation}
    \Delta\sigma(H,K)=|h^{-\frac{1}{2}}\cdot Dh|_K^2+|h^{\frac{1}{2}}\cdot Dh^{-1}|_H^2\geq 0.
  \end{equation}
where $h=K^{-1}H$.
\end{prop}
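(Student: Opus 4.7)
The strategy is to split $\sigma(H,K) = \tr(h) + \tr(h^{-1}) - 2\mathrm{rank}\,E$ and compute $\Delta \tr(h)$ and $\Delta \tr(h^{-1})$ separately, matching them to the two norm-squared terms on the right. Since interchanging the roles of $H$ and $K$ replaces $h$ by $h^{-1}$, the second computation follows from the first by symmetry, so I focus on $\Delta \tr(h)$.

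Because the trace kills any commutator of endomorphisms, one has $d\tr(h) = \tr(Dh) = \tr(D_K h)$ and $d^c \tr(h) = \tr(D_K^c h)$, so that the K\"ahler identity for the Laplacian on functions gives $\Delta \tr(h) = c\,\sqrt{-1}\Lambda_\omega \tr\bigl(D(D_K^c h)\bigr)$ for a constant $c$ fixed by the $d^c$-convention. Now, because both metrics are harmonic, $\sqrt{-1}\Lambda_\omega G_H = \sqrt{-1}\Lambda_\omega G_K = 0$, so (\ref{eq:n1}) yields the key vanishing $\sqrt{-1}\Lambda_\omega D(h^{-1}D_K^c h) = 0$. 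Writing $D_K^c h = h \cdot (h^{-1} D_K^c h)$ and applying the Leibniz rule,
$$D(D_K^c h) \;=\; Dh \wedge (h^{-1}D_K^c h) \;+\; h\cdot D(h^{-1}D_K^c h),$$
and the second term is annihilated under $\sqrt{-1}\Lambda_\omega \tr$ because $\Lambda_\omega$ commutes with multiplication by the form-degree-$0$ factor $h$. Hence $\Delta \tr(h) = c\,\sqrt{-1}\Lambda_\omega \tr\bigl(Dh \wedge h^{-1}D_K^c h\bigr)$.

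It remains to identify this expression with $|h^{-1/2}\cdot Dh|_K^2$. I would do this by a pointwise computation in a $K$-orthonormal frame diagonalising $h$ at the chosen point, using the decompositions $D = D_K' + D_K''$ and $D_K^c = D_K'' - D_K'$ together with the NHYM relations of (\ref{eq:111}): with $h = \mathrm{diag}(\lambda_i)$ at the point, one can pull the factor $h^{-1}$ past the off-diagonal entries and reassemble the wedge/trace as $\sum_{i,j}\lambda_i^{-1}\bigl|(Dh)^i_j\bigr|_\omega^2$, which is exactly $|h^{-1/2}\cdot Dh|_K^2$. The symmetric argument, with $H$ and $K$ exchanged and using the analogous vanishing $\sqrt{-1}\Lambda_\omega D(h \cdot D_H^c h^{-1}) = 0$, yields $\Delta \tr(h^{-1}) = |h^{1/2}\cdot Dh^{-1}|_H^2$; summing gives the claimed formula, with non-negativity immediate. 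The main obstacle is the final identification: one must track the interaction of the NHYM pieces $\psi_K^{1,0}$, $\psi_K^{0,1}$ with the mixed-type splitting $D = D_K' + D_K''$, verify that the resulting cross terms combine correctly, and pin down the sign from the $d^c$-convention so that the outcome is indeed $|h^{-1/2}Dh|_K^2$ rather than its negative.
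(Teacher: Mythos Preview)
Your proposal is correct and follows essentially the same route as the paper: both derive $\Delta\tr h=\sqrt{-1}\Lambda_\omega\tr(Dh\wedge h^{-1}D_K^c h)$ from the identity (\ref{eq:n1}) together with the harmonic condition, and then identify this contraction with $|h^{-1/2}\cdot Dh|_K^2$; the paper simply packages your Leibniz-rule step as the single displayed formula $\tr\big(4h(\sqrt{-1}\Lambda_\omega G_H-\sqrt{-1}\Lambda_\omega G_K)\big)=\Delta\tr h-\sqrt{-1}\Lambda_\omega\tr(Dh\wedge h^{-1}D_K^c h)$. The only substantive point you flag as an ``obstacle'' --- the pointwise identification of the wedge term with the norm squared --- is asserted without proof in the paper as well, so your diagonalisation sketch is in fact slightly more detailed than what appears there.
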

\begin{proof}
  By making use of (\ref{eq:n1}), we obtain
  \begin{equation}
    {\rm tr}\bigg(4h(\sqrt{-1}\Lambda_{\omega}G_H-\sqrt{-1}\Lambda_{\omega}G_K)\bigg)=\Delta {\rm tr}h-\sqrt{-1}\Lambda_{\omega}{\rm tr}(Dh\wedge h^{-1}D_K^ch),
  \end{equation}
  and
  \begin{equation}
  {\rm tr}\bigg(4h^{-1}(\sqrt{-1}\Lambda_{\omega}G_K-\sqrt{-1}\Lambda_{\omega}G_H)\bigg)=\Delta {\rm tr}h^{-1}-\sqrt{-1}\Lambda_{\omega}{\rm tr}(Dh^{-1}\wedge hD_K^ch^{-1}).
  \end{equation}
  Hence, we complete the proof by the following identities
  \begin{equation}
    \sqrt{-1}\Lambda_{\omega}{\rm tr}(Dh\wedge h^{-1}D_K^ch)=|h^{-\frac{1}{2}}\cdot Dh|_K^2,
  \end{equation}
  and
  \begin{equation}
    \sqrt{-1}\Lambda_{\omega}{\rm tr}(Dh^{-1}\wedge hD_{H}^ch^{-1})=|h^{\frac{1}{2}}\cdot Dh^{-1}|_H^2.
  \end{equation}
\end{proof}
By Proposition \ref{HFP2} and \ref{p:un}, we have the following corollary.
\begin{cor}
	Let $H_{1},H_{2}$ be two harmonic metrics, then $H_{1}=e^{c}H_{2}$ where $c\in \mathbb{R}$.
\end{cor}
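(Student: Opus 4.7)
The plan is to argue that the Donaldson distance $\sigma(H_1,H_2)$ is subharmonic on the compact base, hence constant, and then translate that into the identity $H_1 = e^c H_2$ via the simplicity hypothesis built into Proposition \ref{p:un}.

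First I would set $h = H_2^{-1}H_1$ and apply Proposition \ref{HFP2} directly: since both $H_1$ and $H_2$ are harmonic, we have $\sqrt{-1}\Lambda_{\omega}G_{H_1} = 0 = \sqrt{-1}\Lambda_{\omega}G_{H_2}$, so the identity proved there simplifies to
\begin{equation*}
\Delta \sigma(H_1,H_2) = |h^{-\frac{1}{2}}\cdot Dh|_{H_2}^2 + |h^{\frac{1}{2}}\cdot Dh^{-1}|_{H_1}^2 \geq 0.
\end{equation*}
Because $X$ is compact without boundary, integrating this inequality against $\omega^n/n!$ and using $\int_X \Delta \sigma \,\omega^n/n! = 0$ forces $\Delta \sigma(H_1,H_2) \equiv 0$, equivalently $\sigma(H_1,H_2)$ is constant. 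In particular both nonnegative terms on the right vanish identically, so $Dh = 0$.

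From $Dh=0$ I conclude that $h$ is a $D$-parallel endomorphism of $E$; in particular $h$ lies in $\mbox{End}(E,D)$ as considered in Proposition \ref{p:un} (note $D^{0,1}h=0$ is included in $Dh=0$, and $h$ intertwines the full connection $D$ with itself). Invoking the simplicity assumption that underpins Proposition \ref{p:un}, we obtain $\mbox{End}(E,D)\simeq\mathbb{C}$, hence $h = \mu\,Id_E$ for some $\mu\in\mathbb{C}$. Finally, $h = H_2^{-1}H_1$ is a positive self-adjoint operator (with respect to either metric), so its unique eigenvalue $\mu$ must be a positive real number. Writing $\mu = e^c$ with $c = \log\mu \in \mathbb{R}$ gives $H_1 = e^c H_2$, as claimed.

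The main (and essentially only) obstacle is verifying that the right-hand side of the formula in Proposition \ref{HFP2} really forces $Dh=0$, rather than merely killing some $(1,0)$ or $(0,1)$ component; this is immediate from the fact that $|h^{-1/2}Dh|_{H_2}^2$ already controls the full one-form $Dh$ pointwise. Once that is in hand, everything else is an immediate application of the compact maximum principle plus the $\mathbb{C}$-dimensionality of $\mbox{End}(E,D)$ for simple NHYM bundles.
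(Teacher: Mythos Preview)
Your argument is correct and follows exactly the approach the paper indicates: the paper merely states that the corollary follows from Proposition~\ref{HFP2} together with Proposition~\ref{p:un}, and your write-up spells out precisely those steps (subharmonicity of $\sigma$ on the compact $X$ forces $Dh=0$, then simplicity gives $h=\mu\,Id_E$ with $\mu>0$). There is nothing to add.
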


If $H(t)$, $K(t)$ are two solutions of the heat flow (\ref{HFeq1}), then using the same method as Proposition \ref{HFP2}, we can easily deduce the following proposition.
\begin{prop}\label{HFP3}
  Let $H(t)$, $K(t)$ be two solutions of the heat flow (\ref{HFeq1}), then we have
  \begin{equation}
    \bigg(\frac{\partial }{\partial t}-\Delta\bigg)\sigma(H(t),K(t))=-|h^{-\frac{1}{2}}(t)\cdot Dh(t)|_{K(t)}^2-|h^{\frac{1}{2}}(t)\cdot Dh^{-1}(t)|_{H(t)}^2,
  \end{equation}
  where $h(t)=K^{-1}(t)H(t)$.
\end{prop}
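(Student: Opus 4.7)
The plan is to mimic the proof of Proposition~\ref{HFP2}, letting the time derivatives play the role that the harmonic condition played there. First, I split $\sigma(H(t),K(t)) = \tr(h(t)) + \tr(h^{-1}(t)) - 2\,\mbox{rank}\,E$ with $h(t) = K^{-1}(t)H(t)$, so that the task reduces to computing $(\partial_t - \Delta)\tr(h)$ and $(\partial_t - \Delta)\tr(h^{-1})$ separately and summing.

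Next I would differentiate $h = K^{-1}H$ using the heat flow relations $\partial_t H = H\Phi(H)$ and $\partial_t K = K\Phi(K)$ to obtain $\partial_t h = -\Phi(K)h + h\Phi(H)$, and hence $\partial_t \tr(h) = \tr\bigl(h(\Phi(H) - \Phi(K))\bigr) = 4\tr\bigl(h(\sqrt{-1}\Lambda_{\omega} G_H - \sqrt{-1}\Lambda_{\omega} G_K)\bigr)$. A parallel calculation, starting from $\partial_t h^{-1} = -h^{-1}(\partial_t h)h^{-1}$, yields $\partial_t \tr(h^{-1}) = 4\tr\bigl(h^{-1}(\sqrt{-1}\Lambda_{\omega} G_K - \sqrt{-1}\Lambda_{\omega} G_H)\bigr)$. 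These are precisely the two quantities that already appeared on the left-hand sides of the key identities in the proof of Proposition~\ref{HFP2}.

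Finally, I would import those identities: $4\tr\bigl(h(\sqrt{-1}\Lambda_{\omega} G_H - \sqrt{-1}\Lambda_{\omega} G_K)\bigr) = \Delta\tr(h) - |h^{-1/2} Dh|_K^2$ and the analogous formula $4\tr\bigl(h^{-1}(\sqrt{-1}\Lambda_{\omega} G_K - \sqrt{-1}\Lambda_{\omega} G_H)\bigr) = \Delta\tr(h^{-1}) - |h^{1/2} Dh^{-1}|_H^2$, both of which follow from (\ref{eq:n1}) combined with integration by parts and the two pointwise identities recorded in Proposition~\ref{HFP2}. Substituting these into the expressions for $\partial_t \tr(h)$ and $\partial_t \tr(h^{-1})$ and summing gives exactly the claimed formula. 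I do not expect any genuine obstacle here: the heat flow equations simply let us re-interpret the curvature differences $\sqrt{-1}\Lambda_{\omega} G_H - \sqrt{-1}\Lambda_{\omega} G_K$ and its reverse as the time derivatives of $\tr(h)$ and $\tr(h^{-1})$, after which the rest of the computation is word-for-word identical to Proposition~\ref{HFP2}.
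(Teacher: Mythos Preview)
Your proposal is correct and is exactly the approach the paper intends: the paper itself gives no detailed proof of Proposition~\ref{HFP3} beyond the remark ``using the same method as Proposition~\ref{HFP2}'', and your outline spells out precisely that computation. One minor wording remark: the identities you import from the proof of Proposition~\ref{HFP2} are pointwise algebraic consequences of (\ref{eq:n1}) via the Leibniz rule, not ``integration by parts''---no integration is needed, which is important since the conclusion is a pointwise heat inequality.
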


\begin{prop}\label{Prop:Ineq}
  Let $H$ and $K$ are two Hermitian  metrics on the NHYM bundle $(E,D)$, then
  \begin{equation}\label{p:eq1}
    \Delta \log({\rm tr}h+{\rm tr}h^{-1})\geq -4|\sqrt{-1}\Lambda_{\omega}G_H|_H-4|\sqrt{-1}\Lambda_{\omega}G_K|_K,
  \end{equation}
  where $h=K^{-1}H$.  Moreover
  \begin{equation}\label{p:eq2}
  \sup_{X}|\log(h)|_{K}\leq C_{1}\|\log(h)\|_{L^{1}(X,K)}+C_2,
 \end{equation}
 where $C_{1}$ and $C_{2}$ are positive constants depending only on $\sup_{X}|\sqrt{-1}\Lambda_{\omega}G_{H}|_{H}$, $\sup_{X}|\sqrt{-1}\Lambda_{\omega}G_{K}|_{K}$ and the geometry of $(X,\omega)$.
\end{prop}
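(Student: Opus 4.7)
The plan is to establish (\ref{p:eq1}) by directly differentiating $\tr h + \tr h^{-1}$ using the identities already computed in the proof of Proposition \ref{HFP2}, and then to extract (\ref{p:eq2}) via a Moser iteration on the compact K\"ahler manifold $(X,\og)$.

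For (\ref{p:eq1}), rewriting the identities from Proposition \ref{HFP2} yields
\[
\Delta\tr h = 4\tr\bigl(h(\sqrt{-1}\Lambda_\og G_H - \sqrt{-1}\Lambda_\og G_K)\bigr) + |h^{-1/2}\cdot Dh|_K^2,
\]
together with the analogous formula for $\Delta \tr h^{-1}$ (with $H,K$ swapped and $h$ replaced by $h^{-1}$). Writing $f = \tr h + \tr h^{-1}$, I would apply the pointwise matrix inequality $|\tr(hA)| \leq \tr(h)\cdot|A|_H$, valid for positive $h$ and a Hermitian operator $A$ by diagonalizing $A$ in an $H$-orthonormal frame, to bound the curvature terms, obtaining
\[
\Delta f \geq -4f\bigl(|\sqrt{-1}\Lambda_\og G_H|_H + |\sqrt{-1}\Lambda_\og G_K|_K\bigr) + |h^{-1/2}Dh|_K^2 + |h^{1/2}Dh^{-1}|_H^2.
\]
Since $\Delta\log f = \Delta f/f - |\nabla f|^2/f^2$, it remains to control $|\nabla f|^2/f$ by the gradient term on the right. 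A Cauchy--Schwarz estimate $|d\tr h|^2 = |\tr(h^{1/2}\cdot h^{-1/2}Dh)|^2 \leq \tr(h)\cdot|h^{-1/2}Dh|_K^2$, together with its $h^{-1}$ analogue, gives $|\nabla f|^2 \leq f\bigl(|h^{-1/2}Dh|_K^2 + |h^{1/2}Dh^{-1}|_H^2\bigr)$. Substituting this into the formula for $\Delta\log f$ absorbs the gradient squared contribution and produces (\ref{p:eq1}).

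For (\ref{p:eq2}), inequality (\ref{p:eq1}) says that $u := \log f$ is smooth and satisfies $\Delta u \geq -c_0$, where $c_0 = 4\bigl(\sup_X|\sqrt{-1}\Lambda_\og G_H|_H + \sup_X|\sqrt{-1}\Lambda_\og G_K|_K\bigr)$. Equivalently $-u$ is a subsolution of $\Delta v \leq c_0$, and a standard Moser iteration on the compact K\"ahler manifold $(X,\og)$, using its uniform Sobolev inequality, yields $\sup_X u \leq C\|u\|_{L^1(X)}+C'$ with constants depending only on $c_0$ and the geometry of $(X,\og)$. To convert to a bound on $|\log h|_K$, observe that if the eigenvalues of $h$ at a point are $e^{\lambda_1},\ldots,e^{\lambda_r}$, then $\max_i|\lambda_i| \leq u \leq \log(2r) + \max_i|\lambda_i|$, while $|\log h|_K = (\sum_i\lambda_i^2)^{1/2}$ is sandwiched between $\max_i|\lambda_i|$ and $\sqrt{r}\max_i|\lambda_i|$. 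Feeding this comparison into the Moser estimate gives (\ref{p:eq2}).

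The main technical subtlety is the pointwise matrix inequality $|\tr(hA)| \leq \tr(h)|A|$ used to bound the curvature terms in (\ref{p:eq1}); choosing the correct diagonalizing frame and the right adjoint structure requires care, since $h$ is simultaneously $K$- and $H$-self-adjoint positive. Once (\ref{p:eq1}) is in hand, the Moser iteration for (\ref{p:eq2}) is classical, and the linear-algebraic comparison between $\log(\tr h + \tr h^{-1})$ and $|\log h|_K$ is elementary.
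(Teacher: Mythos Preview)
Your proposal is correct and follows essentially the same route as the paper: derive lower bounds on $\Delta\tr h$ and $\Delta\tr h^{-1}$ from the identity (\ref{eq:n1}), absorb $|\nabla f|^2/f$ via Cauchy--Schwarz (the paper phrases this as Young's inequality), and then apply Moser iteration together with the elementary eigenvalue comparison $\log\bigl(\tfrac{1}{2r}(\tr h+\tr h^{-1})\bigr)\le|\log h|_K\le r^{1/2}\log(\tr h+\tr h^{-1})$. One small slip in wording: the phrase ``$-u$ is a subsolution of $\Delta v\le c_0$'' is garbled---what you actually use is simply that $u\ge\log(2r)>0$ satisfies $\Delta u\ge -c_0$, which is already the correct input for the Moser iteration.
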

\begin{proof}
  By making  use of (\ref{eq:n1}), we obtain
  \begin{equation}\label{HFC5}
    \Delta {\rm tr}h\geq |h^{-\frac{1}{2}}\cdot Dh|_K^2-4{\rm tr}h \cdot( |\sqrt{-1}\Lambda_{\omega}G_{H}|_{H}+|\sqrt{-1}\Lambda_{\omega}G_{K}|_{K}),
  \end{equation}
  and
  \begin{equation}\label{HFC6}
    \Delta {\rm tr}h^{-1}\geq |h^{\frac{1}{2}}\cdot Dh^{-1}|_H^2-4{\rm tr}h^{-1}\cdot (|\sqrt{-1}\Lambda_{\omega}G_{H}|_{H}+|\sqrt{-1}\Lambda_{\omega}G_K|_K).
  \end{equation}
  Combining (\ref{HFC5}) and (\ref{HFC6}) yields
  \begin{equation}\label{HFC7}
  \begin{split}
    \Delta({\rm tr}h+{\rm tr}h^{-1})&\geq |h^{-\frac{1}{2}}\cdot Dh|_K^2+|h^{\frac{1}{2}}\cdot Dh^{-1}|_H^2\\
    &\quad -4({\rm tr}h+{\rm tr}h^{-1})\big(|\sqrt{-1}\Lambda_{\omega}G_{H}|_{H}+|\sqrt{-1}\Lambda_{\omega}G_K|_K\big).
    \end{split}
  \end{equation}
  On the other hand, note that
  \begin{equation}\label{HFC8}
    \Delta\log({\rm tr}h+{\rm tr}h^{-1})=-\frac{|{\rm tr}(Dh)+{\rm tr}(Dh^{-1})|^2}{({\rm tr}h+{\rm tr}h^{-1})^2}+\frac{\Delta({\rm tr}h+{\rm tr}h^{-1})}{{\rm tr}h+{\rm tr}h^{-1}}.
  \end{equation}
  Combining (\ref{HFC7}) and (\ref{HFC8}), it suffices to prove
  \begin{equation*}
    \frac{|{\rm tr}(Dh)+{\rm tr}(Dh^{-1})|^2}{{\rm tr}h+{\rm tr}h^{-1}}\leq |h^{-\frac{1}{2}}\cdot Dh|_K^2+|h^{\frac{1}{2}}\cdot Dh^{-1}|_H^2,
  \end{equation*}
  which is valid from the Young's inequality immediately. So we obtain (\ref{p:eq1}).  Noting
\begin{equation}
\log\big(\frac{1}{2r}(\tr(h)+\tr(h^{-1}))\big)\leq |\log(h)|_{K}\leq r^{1/2}\log(\tr(h)+\tr(h^{-1})).
\end{equation}
Applying Moser's iteration, we finish the proof.
\end{proof}

\subsection{Long-time existence of the heat flow}
In this subsection, we consider the long-time existence of the solution to the heat flow (\ref{HFeq1}). Let $H(t)$ be a solution of (\ref{HFeq1}) with initial metric $H_{0}$. Set $h(t)=H_{0}^{-1}H(t)$, then (\ref{HFeq1}) can be written as:
\begin{equation}\label{HFeq2}
    \frac{\partial h(t)}{\partial t}=h(t)(\sqrt{-1}\Lambda_{\omega}D(h^{-1}(t)D_{H_{0}}^{c}h(t))+4\sqrt{-1}\Lambda_{\omega}G_{H_{0}}),
\end{equation}
By above formula, it is easy to see that the heat flow (\ref{HFeq2}) is strictly parabolic, so we obtain the short-time existence from the standard PDE theory.
\begin{thm}\label{ThmHF1}
  For sufficiently small $T>0$, the equation (\ref{HFeq1}) has a smooth solution $H(t)$ defined for $0\leq t<T$.
\end{thm}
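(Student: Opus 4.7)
The plan is to recast the flow as the quasilinear system (\ref{HFeq2}) for $h(t):=H_0^{-1}H(t)$ and to invoke the standard short-time existence theory for strictly parabolic systems on compact manifolds. Since $h(0)=\mbox{Id}_E$ is positive and self-adjoint with respect to $H_0$, and positivity is an open condition, the preservation of the Hermitian property of $H(t)=H_0 h(t)$ for small time will be automatic once the flow is well-posed.

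The crucial step is verifying strict parabolicity. I would linearize the right-hand side
\begin{equation*}
F(h):=h\bigl(\sqrt{-1}\Lambda_\omega D(h^{-1}D_{H_0}^c h)+4\sqrt{-1}\Lambda_\omega G_{H_0}\bigr)
\end{equation*}
in $h$. At a positive self-adjoint $h$, modulo lower-order derivatives in the variation $\eta$, the Fr\'echet derivative $dF_h(\eta)$ has leading part $\sqrt{-1}\Lambda_\omega D D_{H_0}^c\eta$. Writing $D=D_{H_0}^{''}+D_{H_0}^{'}$ and $D_{H_0}^c=D_{H_0}^{''}-D_{H_0}^{'}$, the contributions $(D_{H_0}^{''})^2=G_{H_0}$ and $(D_{H_0}^{'})^2$ act on $\eta\in\mbox{End}(E)$ by commutator with curvature-type $2$-forms, hence are zeroth order, and since $\psi_{H_0}$ is a pointwise $1$-form it only enters the symbol as multiplication. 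Therefore the principal (second-order) part of $\sqrt{-1}\Lambda_\omega D D_{H_0}^c\eta$ reduces to $\sqrt{-1}\Lambda_\omega(\partial_{H_0}\bar{\partial}_{H_0}-\bar{\partial}_{H_0}\partial_{H_0})\eta$. By the K\"ahler identities applied to the unitary connection $D_{H_0}$, this is a positive multiple of the K\"ahler Laplacian on $\mbox{End}(E)$-sections, whose principal symbol at $\xi\in T^*X$ is proportional to $-|\xi|^2\cdot\mbox{Id}$, the correct sign for a forward parabolic system. The NHYM hypothesis enters essentially here: the identities (\ref{eq:111}) are used to establish (\ref{DFC2}), identifying $4\sqrt{-1}\Lambda_\omega G_H$ with $\sqrt{-1}\Lambda_\omega[D,D_H^c]$, and they ensure that $\Phi(H)=4\sqrt{-1}\Lambda_\omega G_H$ is self-adjoint with respect to $H$, so the flow preserves the Hermitian class.

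With strict parabolicity in place, a standard existence theorem for quasilinear parabolic systems on compact manifolds -- either via an inverse function theorem argument in parabolic H\"older spaces, or by a DeTurck-type contraction-mapping argument -- produces a unique smooth solution $h(t)$ on some maximal interval $[0,T)$ with $T>0$. Positivity and self-adjointness of $h(t)$ for small $t$ then follow from the initial condition $h(0)=\mbox{Id}_E$ together with the self-adjointness of $\Phi(H(t))$, by continuity. The main obstacle is the symbol computation itself: one must carefully track the mixed operators $\partial_{H_0},\bar{\partial}_{H_0},\psi_{H_0}^{1,0},\psi_{H_0}^{0,1}$ and invoke the NHYM structure at the right moments so that the leading symbol of $\sqrt{-1}\Lambda_\omega D D_{H_0}^c$ is a strictly positive constant times the identity on $\mbox{End}(E)$. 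The remainder -- bootstrapping smoothness via Schauder estimates and propagating positivity along the flow -- follows the standard template for geometric parabolic equations.
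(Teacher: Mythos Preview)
Your proposal is correct and follows the same approach as the paper: rewrite the flow as (\ref{HFeq2}) in terms of $h(t)=H_0^{-1}H(t)$, observe that the principal part is $\sqrt{-1}\Lambda_\omega D D_{H_0}^c$, which has the K\"ahler Laplacian as its leading symbol, and invoke standard short-time existence for quasilinear parabolic systems on compact manifolds. The paper states this in a single sentence (``it is easy to see that the heat flow (\ref{HFeq2}) is strictly parabolic, so we obtain the short-time existence from the standard PDE theory''), whereas you supply the symbol computation and the role of the NHYM structure explicitly.
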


In the following, we prove the long-time existence of (\ref{HFeq1}) following the standard argument in \cite{Don2}.

\begin{lem}\label{HFL1}
  Let $H(t)$ be a smooth solution of the equation (\ref{HFeq1}) defined for $0\leq t<T<\infty$. Then $H(t)$ converges in $C^0$-topology to some continuous non-degenerate Hermitian metric $H_T$ as $t\to T$.
\end{lem}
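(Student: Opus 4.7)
The plan is to show that $\{H(t)\}_{0\le t<T}$ is Cauchy in the $C^0$-topology and that the limit is a non-degenerate Hermitian form. Two ingredients drive the argument: the uniform pointwise bound $\sup_X |\Phi(H(t))|_{H(t)} \le \sup_X |\Phi(H_0)|_{H_0}$ from (\ref{eq:n2}), and the subsolution property for the Donaldson distance between two flows given by Proposition \ref{HFP3}.

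First I would establish uniform two-sided $C^0$ control of $h(t):=H_0^{-1}H(t)$ on $[0,T)$. Using $\partial_t H = H\,\Phi(H)$ together with (\ref{eq:n2}), a direct computation yields a differential inequality of the form
\begin{equation*}
\frac{d}{dt}\bigl(\tr h + \tr h^{-1}\bigr) \le C_0\bigl(\tr h + \tr h^{-1}\bigr),
\end{equation*}
where $C_0$ depends only on $\sup_X |\Phi(H_0)|_{H_0}$. Integrating over the finite interval $[0,T)$ gives a uniform bound $\tr h(t) + \tr h^{-1}(t) \le C'$ on $X\times[0,T)$, and hence uniform two-sided bounds on $H(t)$ as endomorphisms.

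Next I would prove the Cauchy property. For fixed $\delta\in(0,T)$, both $H(s)$ and $K(s):=H(s+\delta)$ are smooth solutions of the heat flow (\ref{HFeq1}) on $[0,T-\delta)$. Applying Proposition \ref{HFP3} to this pair gives
\begin{equation*}
\Bigl(\frac{\partial}{\partial s}-\Delta\Bigr)\sigma\bigl(H(s),H(s+\delta)\bigr)\le 0,
\end{equation*}
so by the parabolic maximum principle on the compact manifold $X$ the function $s\mapsto \sup_X \sigma(H(s),H(s+\delta))$ is non-increasing. In particular, for $0\le t_1\le t_2<T$,
\begin{equation*}
\sup_X \sigma\bigl(H(t_1),H(t_2)\bigr)\le \sup_X \sigma\bigl(H_0,H(t_2-t_1)\bigr),
\end{equation*}
and the right-hand side tends to $0$ as $t_2-t_1\to 0$ by smoothness of the flow at $t=0$. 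The Cauchy property in the Donaldson distance is equivalent to Cauchyness in $C^0$, so $H(t)\to H_T$ uniformly on $X$ to some continuous Hermitian form $H_T$. The uniform bounds from the first step pass to the limit, so $H_T$ is non-degenerate.

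The main obstacle is the correct application of the parabolic maximum principle to conclude that $\sup_X\sigma(H(s),H(s+\delta))$ is monotone non-increasing; this is standard parabolic comparison applied to the subsolution from Proposition \ref{HFP3} on the compact manifold $X$. With monotonicity in hand, the remaining input is an elementary short-time estimate for $\sigma(H_0,H(\delta))$ that follows from smoothness of the flow at $t=0$, together with the classical equivalence between convergence in the Donaldson distance and $C^0$-convergence on the space of positive Hermitian endomorphisms.
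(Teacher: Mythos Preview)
Your argument is correct and follows essentially the same route as the paper: both use Proposition~\ref{HFP3} applied to a time-shifted pair of solutions together with the parabolic maximum principle to obtain the Cauchy property in the Donaldson distance, and both use the uniform bound on $|\Phi(H(t))|_{H(t)}$ to control $\tr h(t)+\tr h^{-1}(t)$ and conclude non-degeneracy of the limit. The only cosmetic differences are that the paper establishes the Cauchy property first and non-degeneracy second, and it phrases the non-degeneracy estimate as $\bigl|\partial_t\log\tr h(t)\bigr|\le |\Phi(H(t))|_{H(t)}$ rather than your Gr\"onwall-type inequality for $\tr h+\tr h^{-1}$; these are equivalent.
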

\begin{proof}
  Given $\varepsilon>0$, by the continuity at $t=0$, we can find $\delta>0$ such that
  \begin{equation*}
    \sup_{X}\sigma(H(t),H(t'))<\varepsilon, \quad \text{for} \; 0<t,t'<\delta.
  \end{equation*}
  Combining Proposition \ref{HFP3} and the maximum principle yields that
  \begin{equation*}
    \sup_{X}\sigma(H(t),H(t'))<\varepsilon, \quad \text{for} \; T-\delta<t,t'<T.
  \end{equation*}
  This means that $H(t)$ is a  Cauchy sequence and converges to a continuous metric $H_T$.

  So, it remains to show that $H_T$ is a non-degenerate metric. By direct calculation, we have
  \begin{equation}\label{HFC9}
    \bigg|\frac{\partial}{\partial t}\log{\rm tr}h(t)\bigg|\leq |\Phi(H(t))|_{H(t)},
  \end{equation}
  and
  \begin{equation}\label{HFC10}
    \bigg|\frac{\partial}{\partial t}\log{\rm tr}h^{-1}(t)\bigg|\leq |\Phi(H(t))|_{H(t)}.
  \end{equation}
  Hence, (\ref{HFC9}) and (\ref{HFC10}) together with Proposition \ref{HFP1} imply that  $\sigma(H(t),H_0)$  is uniformly bounded on $X\times [0,T)$. Therefore, $H_T$ is a non-degenerate metric.
\end{proof}
Using the same method as in \cite{Don2}, we get the following lemma.
\begin{lem}\label{HFL2}
  Let $H(t),0\leq t< T$ be a  family of Hermitian metrics on a non-Hermitian Yang-Mills bundle $E$ over $X$. Suppose $H(t)$ converges in $C^0$-topology to some continuous metric $H_T$ as $t\to T$ and $\sup_{X}|\sqrt{-1}\Lambda_{\omega}G_{H(t)}|_{H(t)}$ is uniformly bounded in $t$, then $H(t)$ are bounded in $C^1$ and also bounded in $L_2^p$ for any $1<p<\infty$.
\end{lem}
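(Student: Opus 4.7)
The plan is to fix a smooth reference metric $H_0$ on $E$ and reduce the regularity statement to an elliptic estimate for the positive self-adjoint endomorphism $h(t) = H_0^{-1}H(t)$. By the $C^0$-convergence hypothesis, $h(t) \to H_0^{-1}H_T$ in $C^0(X)$, and since $H_T$ is non-degenerate, both $h(t)$ and $h(t)^{-1}$ are uniformly bounded in $L^\infty(X)$ on $[0,T)$. The second identity in (\ref{eq:n1}) then gives
\begin{equation*}
\sqrt{-1}\Lambda_\omega D\bigl(h(t)^{-1} D_{H_0}^c h(t)\bigr) = \Phi(H(t)) - \Phi(H_0),
\end{equation*}
whose right-hand side is uniformly bounded in $L^\infty$ by the hypothesis on $|\Phi(H(t))|_{H(t)}$ together with the smoothness of $H_0$.

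Expanding the left-hand side using $D(h^{-1}) = -h^{-1}(Dh)h^{-1}$ recasts the relation as a second-order, uniformly elliptic quasilinear PDE for $h$ of the schematic form
\begin{equation*}
\sqrt{-1}\Lambda_\omega D D_{H_0}^c h = h\bigl(\Phi(H) - \Phi(H_0)\bigr) + \sqrt{-1}\Lambda_\omega\bigl((Dh)h^{-1} \wedge D_{H_0}^c h\bigr),
\end{equation*}
in which the principal part agrees with the rough Laplacian up to bounded lower-order curvature terms involving $H_0$, and the sole nonlinearity is the quadratic gradient term on the right.

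Before bootstrapping, I would establish a uniform first-order energy estimate by integrating (\ref{HFC7}) with $K=H_0$ over $X$. Since $H(t)$ is smooth for $t<T$, Stokes' theorem yields $\int_X \Delta(\tr h + \tr h^{-1})\frac{\omega^n}{n!} = 0$, and the $L^\infty$ control on $\tr h + \tr h^{-1}$ together with the bounds on $|\Phi(H(t))|_{H(t)}$ and $|\Phi(H_0)|_{H_0}$ immediately give
\begin{equation*}
\sup_{t\in[0,T)}\int_X \bigl(|h^{-1/2}\cdot Dh|_{H_0}^2 + |h^{1/2}\cdot Dh^{-1}|_{H(t)}^2\bigr)\frac{\omega^n}{n!} \le C,
\end{equation*}
i.e. a uniform $L^2_1$ bound on $h(t)$.

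Finally, I would bootstrap by iterated application of the $L^p$ Calder\'on-Zygmund theory to the quasilinear PDE above, exactly as in \cite{Don2}. Treating the quadratic gradient source as a known right-hand side at each stage, the integrability of $Dh$ is successively improved: starting from $Dh \in L^2$, one argues via reverse-H\"older/Gehring-type inequalities (or the Moser-type iteration used by Donaldson) to reach $Dh \in L^p$ for every $p < \infty$, so that the PDE then gives $h \in L^p_2$ for every $p < \infty$, and hence $h \in C^1$ by the Sobolev embedding. The main obstacle is precisely this last bootstrap, since quadratic gradient nonlinearities are borderline for linear elliptic theory; the saving feature is that the equation for $h$ shares the analytic structure of the one arising in Donaldson's treatment of the Hermitian-Einstein heat flow, so his iteration scheme transfers to the NHYM setting verbatim once the $L^2_1$ estimate is in hand.
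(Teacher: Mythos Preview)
Your proposal is correct and follows the same route as the paper: the paper gives no independent argument for this lemma, stating only that it is obtained ``using the same method as in \cite{Don2}.'' Your outline is precisely a fleshing-out of Donaldson's scheme in the NHYM setting---reducing to the elliptic equation for $h=H_0^{-1}H(t)$ via (\ref{eq:n1}), extracting the uniform $L^2_1$ energy bound by integrating (\ref{HFC7}), and then invoking Donaldson's bootstrap for the quadratic-gradient nonlinearity---so you are in fact supplying more detail than the paper itself does.
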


\begin{thm}
  The equation (\ref{HFeq1}) has a unique long-time solution $H(t)$ for $0\leq t<+\infty$.
\end{thm}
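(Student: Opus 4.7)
The plan is the standard Donaldson-style continuation argument: let $[0,T)$ be the maximal interval on which a smooth solution $H(t)$ exists (with $T > 0$ by Theorem \ref{ThmHF1}), and assume for contradiction that $T < +\infty$. I will show $H(t)$ extends smoothly across $t = T$, which contradicts maximality.

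First I would collect the a priori bounds. By Proposition \ref{HFP1} and the maximum principle, one has $\sup_X |\Phi(H(t))|_{H(t)} \leq \sup_X |\Phi(H_0)|_{H_0}$ uniformly in $t \in [0,T)$; this is inequality (\ref{eq:n2}). Next, Lemma \ref{HFL1} gives a continuous non-degenerate limit metric $H_T = \lim_{t \to T} H(t)$ in $C^0$-topology. With both the uniform $\Phi$-bound and the $C^0$-convergence in hand, Lemma \ref{HFL2} produces uniform $C^1$ and $L^p_2$ bounds for $H(t)$ up to $t = T$ (for any $1 < p < \infty$).

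Then I would bootstrap. Writing the flow in the form (\ref{HFeq2}) with $h(t) = H_0^{-1}H(t)$, the equation is a quasilinear strictly parabolic equation in $h$, and the coefficients are controlled by the $L^p_2$ bound on $H(t)$. Applying standard parabolic Schauder/Calderón-Zygmund theory iteratively yields uniform $C^k$ bounds on $H(t)$ for every $k$, so $H_T$ is smooth and $H(t) \to H_T$ smoothly as $t \to T$. Restarting the flow at $t = T$ with initial datum $H_T$ using Theorem \ref{ThmHF1} extends $H(t)$ smoothly to $[0, T + \varepsilon)$ for some $\varepsilon > 0$, contradicting the maximality of $T$. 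Hence $T = +\infty$.

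For uniqueness, suppose $H_1(t)$ and $H_2(t)$ are two smooth solutions of (\ref{HFeq1}) on $[0, T_0]$ with the same initial datum $H_0$. Proposition \ref{HFP3} yields
\begin{equation}
\left(\frac{\partial}{\partial t} - \Delta\right)\sigma(H_1(t), H_2(t)) \leq 0,
\end{equation}
and $\sigma(H_1(0), H_2(0)) = 0$. Since $X$ is compact, the maximum principle forces $\sigma(H_1(t), H_2(t)) \equiv 0$, whence $H_1 \equiv H_2$. The main technical obstacle is the higher-regularity bootstrap: one must verify that the strict parabolicity of (\ref{HFeq2}) together with the $L^p_2$ control from Lemma \ref{HFL2} is enough to push regularity all the way to $C^\infty$ up to $t = T$; this is routine but needs the quasilinear structure and smoothness of the coefficients of $D$ and $H_0$, together with the non-degeneracy of $H_T$ guaranteed by Lemma \ref{HFL1}.
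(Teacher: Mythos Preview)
Your proposal is correct and follows essentially the same approach as the paper: short-time existence, uniform bound on $|\Phi(H(t))|$ via Proposition \ref{HFP1} and the maximum principle, $C^0$-convergence from Lemma \ref{HFL1}, $C^1$ and $L^p_2$ bounds from Lemma \ref{HFL2}, then parabolic bootstrap to $C^\infty$ and restart; uniqueness from Proposition \ref{HFP3} and the maximum principle. The only cosmetic difference is that the paper packages the higher-regularity step by citing Hamilton's method (\cite{H}) rather than spelling out the Schauder/Calder\'on--Zygmund iteration.
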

\begin{proof}
  By Theorem \ref{ThmHF1}, we know that the solution $H(t)$ of the equation (\ref{HFeq1}) exists  for a short time defined for $0\leq t<T$. On the one hand,  we deduce that $H(t)$ converges in $C^0$-topology to a non-degenerate  continuous metric $H(T)$ as $t\to T$ from Lemma \ref{HFL1}. On the other hand, using Proposition (\ref{HFP1}) and maximum principle implies that $\sup_{X}|\Lambda_{\omega}G_{H(t)}|_{H_0}$  is uniformly bounded in $t$. Hence, by Lemma \ref{HFL2}, $H(t)$ is bounded in $C^1$ and also bounded in $L_2^p$ for $1<p<\infty$. Finally, we can use the Hamilton's method (\cite{H}) to deduce that $H(t)$ converges to $H(T)$ in $C^{\infty}$-topology and the solution can be continued past $T$. Therefore, the equation (\ref{HFeq1}) has a solution $H(t)$ which  exists for all time.

  It is easy to obtain the uniqueness of the solution from Proposition \ref{HFP3} and the maximum principle.
\end{proof}

In the following, we will derive the $C^{1}$-estimates of the solution using maximum principle. Let $H(t)$ be the solution of equation (\ref{HFeq1}) with initial metric $H_{0}$. By direct calculation, we have
\begin{equation}
	\frac{\partial}{\partial t}(\sigma(H_{0},H(t))+2r)\leq 2|\Phi(H(t))|_{H(t)}(\sigma(H_{0},H(t))+2r).
\end{equation}
Then we obtain the following $C^{0}$-estimate of $H(t)$.
\begin{prop}
	Let $C=\sup_{X}\Phi(H_{0})$, then
	\begin{equation}
		\sup_{X}\sigma(H_{0},H(t))\leq 2r(e^{Ct}-1).
	\end{equation}
\end{prop}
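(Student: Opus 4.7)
The plan is to combine three ingredients: the differential inequality already stated immediately before the proposition, namely $\partial_t(\sigma(H_0,H(t)) + 2r) \leq 2|\Phi(H(t))|_{H(t)}(\sigma(H_0,H(t)) + 2r)$; the a priori bound $\sup_X|\Phi(H(t))|_{H(t)} \leq \sup_X|\Phi(H_0)|_{H_0} = C$ from (\ref{eq:n2}) (which follows from Proposition \ref{HFP1} and the parabolic maximum principle applied to $|\Phi(H(t))|_{H(t)}^2$); and a scalar Gronwall argument applied to $F(t) := \sup_X(\sigma(H_0,H(t)) + 2r)$. Since $H(0) = H_0$ we have $F(0) = 2r$. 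The parabolic maximum principle upgrades the pointwise inequality to $F'(t) \leq C F(t)$ at points realizing the spatial supremum, after which integration yields $F(t) \leq 2r e^{Ct}$ and hence $\sup_X \sigma(H_0,H(t)) \leq 2r(e^{Ct} - 1)$.

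The only thing requiring verification is the differential inequality itself. Setting $h(t) = H_0^{-1}H(t)$, the heat equation (\ref{HFeq1}) gives $\partial_t h = h \Phi(H)$ and therefore $\partial_t h^{-1} = -\Phi(H) h^{-1}$. Consequently
\begin{equation*}
\frac{d}{dt}\big(\tr h + \tr h^{-1}\big) = \tr\big((h - h^{-1})\Phi(H)\big).
\end{equation*}
Both $h$ and $\Phi(H)$ are self-adjoint with respect to $H$, so Cauchy--Schwarz in the $H$-Hilbert--Schmidt inner product yields $|\tr((h - h^{-1})\Phi(H))| \leq |h - h^{-1}|_H \cdot |\Phi(H)|_H$. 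Diagonalizing $h$ in an $H$-orthonormal basis with positive eigenvalues $\mu_i$, one has $|h - h^{-1}|_H^2 = \sum_i(\mu_i - \mu_i^{-1})^2 \leq \big(\sum_i(\mu_i + \mu_i^{-1})\big)^2 = (\sigma(H_0,H) + 2r)^2$, which produces the required pointwise estimate.

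No substantive obstacle is expected. The simultaneous diagonalizability of $h$ and its non-commutation with $\Phi(H)$ is not an issue because only the diagonal entries of $\Phi(H)$ enter the estimate, bounded individually by $|\Phi(H)|_H$. All remaining steps are classical applications of the maximum principle and Gronwall, and the uniform bound on $|\Phi(H(t))|_{H(t)}$ needed to close the argument has already been established.
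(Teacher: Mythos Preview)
Your proposal is correct and follows the same route the paper sketches: the paper simply records the pointwise differential inequality $\partial_t(\sigma(H_0,H(t))+2r)\leq 2|\Phi(H(t))|_{H(t)}(\sigma(H_0,H(t))+2r)$ and then states the proposition without further argument, leaving the Gronwall step implicit. Your write-up supplies exactly what the paper omits, namely the derivation of that inequality from $\partial_t h=h\Phi(H)$ together with Cauchy--Schwarz and the eigenvalue estimate $|h-h^{-1}|_H\leq \sigma+2r$; in fact your computation yields the inequality \emph{without} the extraneous factor $2$, which is precisely what is needed to land on $2r(e^{Ct}-1)$ rather than $2r(e^{2Ct}-1)$. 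One small simplification: since the differential inequality is pointwise in $x$ and purely in $t$, ordinary Gronwall at each fixed $x$ already gives $\sigma(x,t)+2r\leq 2re^{Ct}$, so there is no need to invoke a parabolic maximum principle for this step.
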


If we denote the anti-self-adjoint part (resp. self-adjoint part) of $F_D$ by $F_D^{+}$ (resp. $F_D^{-}$), i.e., $F_{D}^{+}=D_{H}^{2}+\psi_{H}\wedge\psi_{H}$, $F_{D}^{-}=D_{H}\psi_{H}$, then we have
\begin{equation}
	F_{D}=F_{D}^{+}+F_{D}^{-},\ \ \ \ |F_{D}|^{2}_{H,\omega}=|F_{D}^{+}|^{2}_{H,\omega}+|F_{D}^{-}|^{2}_{H,\omega}.
\end{equation}
By direct calculation, we obtain the following Bochner-type formula
\begin{equation}
	\begin{split}
		-\Delta|\psi_{H}|^{2}_{H,\omega}=&-2\langle\Box^{+}\psi_{H},\psi_{H}\rangle_{H,\omega}-2|\nabla\psi_{H}|^{2}_{H,\omega}-2\langle \psi_{H}\circ Ric,\psi_{H}\rangle_{H,\omega}\\
		&-2|\psi_{H}\wedge\psi_{H}|^{2}_{H,\omega}+4\langle F_{D}^{+},\psi_{H}\wedge\psi_{H}\rangle,
	\end{split}
\end{equation}
where $\Box^{+}=-D_{H}D_{H}^{*}-D_{H}^{*}D_{H}$ and $\psi_{H}\circ Ric=g^{jk}Ric_{ij}\psi_{k}dx^{i}$. So we can easily deduce that
\begin{equation}
	\begin{split}
		\bigg(\frac{\partial}{\partial t}-\Delta\bigg)|\psi_{H}|^{2}_{H,\omega}=&-2|\nabla\psi_{H}|^{2}_{H,\omega}-2\langle \psi_{H}\circ Ric,\psi_{H}\rangle_{H,\omega}\\
		&-2|\psi_{H}\wedge\psi_{H}|^{2}_{H,\omega}+4\langle F_{D}^{+},\psi_{H}\wedge\psi_{H}\rangle_{H,\omega}+2\langle D_{H}^{*}D_{H}\psi_{H},\psi_{H}\rangle_{H,\omega}.
	\end{split}
\end{equation}
Since $\sqrt{-1}\Lambda_{\omega}D_{H}\psi_{H}=0$, by the K\"ahler identities, we have
\begin{equation}
	D_{H}^{*}D_{H}\psi_{H}=\sqrt{-1}\Lambda_{\omega}D_{H}^{c}D_{H}\psi_{H}.
\end{equation}
For simplicity, we denote the $(1,0)$-part of $\psi_H$ by $\theta$, then
\begin{equation}
	\begin{split}
		D_{H}^{c}D_{H}\psi_{H}=&-[\theta^{*}\wedge\theta^{*},\theta]+[\theta\wedge\theta,\theta^{*}]+[F_{D}^{+},\theta^{*}]\\
		&-[[\theta,\theta^{*}],\theta^{*}]-[F_{D}^{+},\theta]+[[\theta,\theta^{*}],\theta].
	\end{split}
\end{equation}
A direct calculation shows that
\begin{equation}
	\begin{split}
		\langle \sqrt{-1}\Lambda_{\omega}D_{H}^{c}D_{H}\psi_{H},\psi_{H}\rangle_{H}=&|[\theta,\theta]|^{2}_{H,\omega}+|[\theta,\theta]|^{2}_{H,\omega}+\langle F^{+}_{D},[\theta,\theta^{*}]\rangle_{H,\omega}\\
		&-|[\theta,\theta]|^{2}_{H,\omega}+\langle F^{+}_{D},[\theta,\theta^{*}]\rangle_{H,\omega}-|[\theta,\theta]|^{2}_{H,\omega}\\
		=&2\langle F^{+}_{D},\psi_{H}\wedge\psi_{H}\rangle_{H,\omega}.
	\end{split}
\end{equation}
Combining above inequalities, we have the following proposition.
\begin{prop}\label{p:c1} Let $(E,D)$ be a NHYM bundle over the compact K\"ahler manifold $(X,\omega)$. Given any Hermitian metirc $H$, we have
\begin{equation}
	\begin{split}
		\bigg(\frac{\partial}{\partial t}-\Delta\bigg)|\psi_{H}|^{2}_{H,\omega}=&-2|\nabla\psi_{H}|^{2}_{H,\omega}-2\langle \psi_{H}\circ Ric,\psi_{H}\rangle_{H,\omega}\\
		&-2|\psi_{H,D}\wedge\psi_{H,D}|^{2}_{H,\omega}+8\langle F_{D}^{+},\psi_{H}\wedge\psi_{H}\rangle_{H,\omega}.
	\end{split}
\end{equation}
\end{prop}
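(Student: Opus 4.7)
The plan is to combine a Weitzenböck identity for $|\psi_H|^2_{H,\omega}$ with the heat-flow time derivative and then exploit the NHYM system (\ref{eq:111}) to rewrite the residual second-order term. First, treating $\psi_H$ as a section of $\Omega^1(\mathrm{End}(E))$ for the unitary connection $D_H$, the standard Bochner formula produces the expression displayed just before the proposition for $-\Delta|\psi_H|^2_{H,\omega}$ containing $-2\langle\Box^+\psi_H,\psi_H\rangle$ (with $\Box^+=-D_HD_H^*-D_H^*D_H$), a Ricci contraction $-2\langle\psi_H\circ\mathrm{Ric},\psi_H\rangle$, the quadratic term $-2|\psi_H\wedge\psi_H|^2$, and a curvature contribution $4\langle F_D^+,\psi_H\wedge\psi_H\rangle$ coming from the action of the curvature of $\mathrm{End}(E)$ on the $\mathrm{End}(E)$-valued form $\psi_H$.

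Next I would differentiate $|\psi_H|^2_{H,\omega}$ along the heat flow (\ref{HFeq1}), using the identity $D_H^*\psi_H = 2\sqrt{-1}\Lambda_\omega G_H$ established in Section~\ref{sec:Pre}. A short computation shows that $\partial_t|\psi_H|^2$ exactly cancels the $\langle D_HD_H^*\psi_H,\psi_H\rangle$ half of the Hodge-Laplacian term, so that the heat operator applied to $|\psi_H|^2$ equals the Bochner right-hand side with $-2\langle\Box^+\psi_H,\psi_H\rangle$ replaced by $+2\langle D_H^*D_H\psi_H,\psi_H\rangle$. It then suffices to prove that $\langle D_H^*D_H\psi_H,\psi_H\rangle_{H,\omega} = 2\langle F_D^+,\psi_H\wedge\psi_H\rangle_{H,\omega}$, since the resulting $4\langle F_D^+,\psi_H\wedge\psi_H\rangle$ combines with the $4\langle F_D^+,\psi_H\wedge\psi_H\rangle$ from the Bochner term to give the coefficient $8$ claimed in the proposition.

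To establish this identity I would invoke the NHYM relation (\ref{SAP}), namely $\sqrt{-1}\Lambda_\omega D_H\psi_H = 0$, together with the Kähler identity $D_H^* = \sqrt{-1}[\Lambda_\omega,D_H^c]$, to reduce to $D_H^*D_H\psi_H = \sqrt{-1}\Lambda_\omega D_H^cD_H\psi_H$. Writing $\theta = \psi_H^{1,0}$ and expanding $D_H^cD_H\psi_H$ as a bracket in $\partial_H,\bar\partial_H,\theta,\theta^*$, the equations in (\ref{eq:111}) — in particular $\partial_H\theta=\bar\partial_H\theta^*=0$ and $\partial_H^2+\theta\wedge\theta=0$ — annihilate every term except those reducing to $\pm[F_D^+,\theta]$, $\pm[F_D^+,\theta^*]$, and $\pm[[\theta,\theta^*],\theta]$, $\pm[[\theta,\theta^*],\theta^*]$. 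Pairing the result against $\psi_H$ under $\sqrt{-1}\Lambda_\omega$ and collecting the eight resulting scalar terms produces several cancellations, leaving exactly $2\langle F_D^+,\psi_H\wedge\psi_H\rangle_{H,\omega}$.

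The main obstacle is this last step of sign and coefficient bookkeeping. Every equation in the NHYM system (\ref{eq:111}) is used, and the four-fold commutator expansion of $D_H^cD_H\psi_H$ yields multiple $[\theta,\theta^*]$-quadratic expressions that must cancel in pairs so that precisely the coefficient $2$ on $\langle F_D^+,\psi_H\wedge\psi_H\rangle$ survives. All other ingredients — the Weitzenböck formula for $\mathrm{End}(E)$-valued one-forms, the Kähler identities, and the heat-flow derivative — are classical.
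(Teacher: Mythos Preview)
Your proposal is correct and follows essentially the same route as the paper: the Bochner formula for $-\Delta|\psi_H|^2$, the heat-flow derivative converting $-2\langle\Box^+\psi_H,\psi_H\rangle$ into $+2\langle D_H^*D_H\psi_H,\psi_H\rangle$, the K\"ahler identity plus (\ref{SAP}) to rewrite $D_H^*D_H\psi_H=\sqrt{-1}\Lambda_\omega D_H^cD_H\psi_H$, and then the type decomposition $\theta=\psi_H^{1,0}$ together with (\ref{eq:111}) to show the pairing equals $2\langle F_D^+,\psi_H\wedge\psi_H\rangle$. The only minor inaccuracy is in your list of surviving terms: the paper's expansion of $D_H^cD_H\psi_H$ also contains the $[\theta\wedge\theta,\theta^*]$ and $[\theta^*\wedge\theta^*,\theta]$ pieces (arising from $\partial_H^2=-\theta\wedge\theta$ etc.), which after pairing yield the $\pm|[\theta,\theta]|^2$ terms that cancel against those coming from $[[\theta,\theta^*],\theta]$ and $[[\theta,\theta^*],\theta^*]$.
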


\begin{prop}\label{p:last}
  Let $T\in(0,+\infty]$. Assume that there exists a constant $C_{1}>0$ such that
  \begin{equation}
      \sup_{X\times[0,T)}|\log(h)|_{K}<C_{1},
  \end{equation}
  then there exists a constant $C_{2}>0$ depending only on $C_{1}$ and the geometry of $(X,\omega)$ such that
  \begin{equation}
  	\sup_{X\times[0,T)}|\psi_{H}|_{H,\omega}<C_{2}.
  \end{equation}
\end{prop}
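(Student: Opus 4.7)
The plan is to combine the Bochner-type identity of Proposition \ref{p:c1} with a parabolic Moser iteration, using the monotonicity of Donaldson's functional to secure the uniform-in-time $L^2$ input required for the iteration to yield a constant independent of $T$. Starting from Proposition \ref{p:c1}, Young's inequality applied to the cross term gives
\[
8\langle F_D^+, \psi_H\wedge\psi_H\rangle_{H,\omega} \leq 2|\psi_H\wedge\psi_H|^2_{H,\omega} + 8|F_D^+|^2_{H,\omega},
\]
which exactly absorbs the $-2|\psi_H\wedge\psi_H|^2$ term. Since $F_D$ is a fixed smooth $\mathrm{End}(E)$-valued $2$-form on the compact $X$ and $|\log h|_K\leq C_1$ forces $H$ and $K$ to be uniformly equivalent with constants depending only on $C_1$, we obtain $|F_D^+|^2_{H,\omega} \leq C(C_1)|F_D|^2_{K,\omega}$. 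Estimating the Ricci term by $2|\mathrm{Ric}|\cdot |\psi_H|^2$, we conclude
\[
(\partial_t - \Delta)|\psi_H|^2_{H,\omega} \leq A\,|\psi_H|^2_{H,\omega} + B,
\]
where $A, B>0$ depend only on $C_1$ and the geometry of $(X,\omega)$.

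Next, I would extract a time-uniform spatial $L^2$ bound on $\psi_H$ from Donaldson's functional. Direct differentiation shows $\tfrac{d}{dt}\mathcal{M}(K, H(t)) = -\int_X |\Phi(H(t))|^2_{H(t)}\,\omega^n/n! \leq 0$, so $\mathcal{M}(K, H(t))\leq \mathcal{M}(K, H_0)$ for all $t\in [0,T)$. Writing $s(t) = \log h(t)$ and plugging into formula (\ref{DFkey}), the hypothesis $|s(t)|_K \leq C_1$ together with the elementary lower bound $\Psi(x,y) \geq c(C_1)>0$ for $|x|,|y|\leq C_1$ produces
\[
c(C_1)\int_X |Ds(t)|^2_{K,\omega}\,\omega^n/n! \leq \mathcal{M}(K, H_0) + C(C_1, K).
\]
Combining this with the identity $\psi_H = h^{-1}\psi_K h + \tfrac{1}{2}(D - h^{-1}\circ D\circ h)$, the $C^0$ bound on $h$, and the power-series expansion of $h = e^s$ to compare $|Dh|_K$ with $|Ds|_K$, we arrive at a uniform spatial $L^2$-bound $\int_X|\psi_H|^2_{H,\omega}\,\omega^n/n! \leq M$ for all $t\in[0,T)$, with $M$ depending only on $C_1$ and the (fixed) geometric data.

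Finally, I would invoke standard parabolic De Giorgi--Nash--Moser iteration applied to the non-negative subsolution $|\psi_H|^2$ of the heat inequality above: on each space-time cylinder $X\times[t_0-1, t_0]$ with $t_0\geq 1$, the uniform spatial $L^2$ bound together with the bounded coefficients $A, B$ upgrades to $\sup_{X\times[t_0-1/2, t_0]}|\psi_H|^2 \leq C_2$ independent of $t_0$. The short initial layer $t\in[0,1]$ is covered by the usual parabolic maximum principle applied directly to the heat inequality, starting from the bounded initial datum $|\psi_{H_0}|^2_{H_0}$. Taking the maximum yields the desired bound on $[0, T)$. The main technical obstacle is the second step: one must carefully convert the Donaldson-functional estimate for $\int|Ds|^2_K$ into an $L^2$-bound on $\psi_H$, handling the non-commutativity in the exponential series expansion of $h = e^s$ when relating $|Dh|$ to $|Ds|$, all in a way uniform in $C_1$; without this ingredient one only gets a $T$-dependent bound from the maximum principle alone.
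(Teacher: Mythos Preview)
Your argument is essentially correct, but it takes a substantially heavier route than the paper's proof. The paper avoids both Donaldson's functional and Moser iteration entirely: it observes that
\[
\bigg(\frac{\partial}{\partial t}-\Delta\bigg)\tr(h)=-|Dh\cdot h^{-1/2}|^{2}_{H,\omega}+\tr(h\cdot\Phi(H_{0})),
\]
and, using the $C^{0}$ bound on $h$ together with the identity $\psi_{H}=h^{-1}\psi_{K}h+\tfrac{1}{2}(D-h^{-1}\circ D\circ h)$, extracts the pointwise lower bound $|Dh\cdot h^{-1/2}|^{2}_{H,\omega}\geq \tilde C_{1}|\psi_{H}|^{2}_{H,\omega}-\tilde C_{2}$. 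Combining this with the differential inequality you derived for $|\psi_{H}|^{2}$, the test function $f=A\,\tr(h)+|\psi_{H}|^{2}$ with $A$ large satisfies $(\partial_{t}-\Delta)f\leq -\tilde C_{5}|\psi_{H}|^{2}+\tilde C_{6}$, and a single application of the maximum principle finishes the proof.

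The trade-off: the paper's approach is shorter and purely pointwise, exploiting that $\tr(h)$ already encodes the needed negative $|\psi_{H}|^{2}$ term; no integral input is required. Your approach, by contrast, routes through the global monotonicity of $\mathcal{M}$ to manufacture a uniform spatial $L^{2}$ bound and then upgrades via parabolic Moser iteration. This works, and the $L^{2}$ step you flag as the ``main technical obstacle'' does go through (the lower bound $\Psi(x,y)\geq c(C_{1})$ on $[-C_{1},C_{1}]^{2}$ and the comparison of $|Dh|_{K}$ with $|Ds|_{K}$ under a $C^{0}$ bound on $s$ are both elementary), but it is more machinery than necessary here. The paper's test-function trick is worth internalizing: whenever you have a $C^{0}$-bounded quantity whose heat operator produces $-|\nabla(\text{something})|^{2}$, you can often pair it with the quantity you actually want to bound and bypass iteration altogether.
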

\begin{proof}
Computing directly, we obtain
	\begin{equation}
		\bigg(\frac{\partial}{\partial t}-\Delta\bigg)\tr(h)=-|Dh\cdot h^{-1/2}|^{2}_{H,\omega}+\tr(h\cdot\Phi(H_{0})).
	\end{equation}
Since $|Dh\cdot h^{-1/2}|^{2}_{H,\omega}\geq \tilde{C}_{1}|\psi_{H}|_{H,\omega}^{2}-\tilde{C}_{2}$, we have
	\begin{equation}
	\bigg(\frac{\partial}{\partial t}-\Delta\bigg)\tr(h)\leq -\tilde{C}_{1}|\psi_{H}|_{H,\omega}^{2}+\tilde{C}_{3}.
\end{equation}
By Proposition \ref{p:c1}, we have
\begin{equation}
	\begin{split}
		\bigg(\frac{\partial}{\partial t}-\Delta\bigg)|\psi_{H}|^{2}_{H,\omega}\leq &\tilde{C}_{4}|\psi_{H}|_{H,\omega}^{2}.
	\end{split}
\end{equation}
Consider the following test function
\begin{equation}
	f=A\cdot\tr(h)+|\psi_{H}|^{2}_{H,\omega},
\end{equation}
where the constant $A$ is chosen large enough such that
\begin{equation}
	\begin{split}
		\bigg(\frac{\partial}{\partial t}-\Delta\bigg)f\leq &-\tilde{C}_{5}|\psi_{H}|_{H,\omega}^{2}+\tilde{C}_{6}.
	\end{split}
\end{equation}
All  the constants $\tilde{C}_{i}$ depend only on $C_{1}$, $H_{0}$ and the geometry of $(X,\omega)$. Then using the maximum principle, we obtain
 \begin{equation}
	\sup_{X\times[0,T)}|\psi_{H}|_{H,\omega}<C_{2}.
\end{equation}

\end{proof}

\section{Proof of theorem \ref{thm:m}}\label{sec:FP}
 Let $H(t)$ be a solution of the equation (\ref{HFeq1}) with initial metric $K$. Without loss of generality, we can assume  that the initial metric $K$ satisfies $\tr(\Phi(K))=0$. As the same as it before, we set $h(t)=K^{-1}H(t)$ and $s(t)=\log(h(t))$.

First, we prove the following lemma which will be used in the proof of theorem \ref{thm:m}.
\begin{lem}\label{l:1}
	Let $\pi\in L_{1}^{2}(X,\mbox{End}(E))$ be a projection map, and it satisfies
	\begin{equation}
		\pi^{2}=\pi^{*H}=\pi,\ \ (Id-\pi)D\pi=0\
	\end{equation}
	almost everywhere on $X$. Then $\pi\in C^{\infty}(X,\mbox{End}(E))$, and $\pi(E)$ defines a $D$-subbundle of $E$.
\end{lem}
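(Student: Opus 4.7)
The plan is to follow the regularity argument already carried out inside the proof of Lemma \ref{l:0}; Lemma \ref{l:1} merely isolates that computation as a standalone statement, so no genuinely new obstacle is expected.

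First I would derive an explicit formula for $D\pi$. Applying $D$ in the weak sense (which is legitimate since $\pi^2=\pi$ forces $\pi\in L^\infty$) to $\pi^2=\pi$ and using the hypothesis $(Id-\pi)D\pi=0$, i.e.\ $D\pi=\pi D\pi$, gives $D\pi\cdot\pi=0$ almost everywhere. Decompose $D=D_H+\psi_H$ with $D_H$ the $H$-unitary connection and $\psi_H$ the $H$-self-adjoint one-form, and set $\bar D=D_H-\psi_H$. Self-adjointness $\pi^{*H}=\pi$ forces $\bar D\pi\cdot(Id-\pi)=0$ a.e. Combining these two identities exactly as in equation (\ref{eq}) of Lemma \ref{l:0} yields
\begin{equation*}
D\pi=2\pi\,\psi_H\,(Id-\pi)\quad\text{a.e.\ on } X.
\end{equation*}

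Next I would bootstrap to smoothness, which is the heart of the argument. Since $|\pi|_H$ is uniformly bounded, $\pi\in L^p(X,\mbox{End}(E))$ for every $p\in(0,\infty]$; the displayed formula then promotes this to $D\pi\in L^p$ for every $p$, so $\pi\in L_1^p$ for every $p$. Sobolev embedding together with repeated application of the same algebraic formula (whose right-hand side depends algebraically on $\pi$ and on the smooth datum $\psi_H$) yields $\pi\in C^\infty(X,\mbox{End}(E))$, just as in the proof of Lemma \ref{l:0}. Finally, once $\pi$ is smooth, $\pi(E)$ is a smooth subbundle of $E$ of locally constant rank $\mathrm{tr}\,\pi$, and the hypothesis $(Id-\pi)D\pi=0$ is precisely the $D$-invariance condition: for any smooth section $v=\pi w$ of $\pi(E)$, the Leibniz rule gives $Dv=(D\pi)w+\pi Dw$, hence $(Id-\pi)Dv=(Id-\pi)(D\pi)w=0$, so $Dv\in\Omega^1(\pi(E))$. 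Thus $\pi(E)$ is a $D$-subbundle of $E$, completing the proof.
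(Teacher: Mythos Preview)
Your proposal is correct and follows essentially the same route as the paper: the paper first observes that $|\pi|_H^2$ is constant (hence $\pi\in L^p$ for all $p$) and then explicitly says ``the rest of the proof is exactly the same as Lemma \ref{l:0},'' which is precisely the algebraic identity $D\pi=\pm 2\pi\psi_H(Id-\pi)$ plus Sobolev bootstrapping that you spell out. The only cosmetic difference is that you obtain $\pi\in L^\infty$ directly from the pointwise projection condition rather than via $d|\pi|_H^2=0$, and you add the explicit verification that $(Id-\pi)D\pi=0$ encodes $D$-invariance of $\pi(E)$.
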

\begin{proof}
	By the conditions above, we have $d|\pi|^{2}_{H}=d\tr(\pi^{2})=0$ in the sense of distribution. Therefore $|\pi|^{2}_{H} :=r_{1}\leq r$ almost everywhere, and $\pi\in L^{p}(X,\mbox{End}(E))$ for $0<p\leq+\infty$. The rest of the proof is exactly the same as Lemma \ref{l:0}.
\end{proof}

\begin{prop}\label{Prop:m}
  Suppose that the NHYM bundle $(E,D)$ is simple, then there exists a constant $C>0$ independent of $t$ such that
  \begin{equation}\label{PF1}
    \|s(t)\|_{L^{\infty}}<C.
  \end{equation}
\end{prop}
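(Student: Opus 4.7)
The plan is to argue by contradiction, following the Uhlenbeck--Yau/Simpson destabilizing-subsheaf strategy, adapted to the NHYM setting. Suppose $\|s(t)\|_{L^{\infty}}$ is not uniformly bounded. By (\ref{eq:n2}) we have $\sup_{X}|\Phi(H(t))|_{H(t)}\leq \sup_{X}|\Phi(K)|_{K}$, so Proposition \ref{Prop:Ineq} applied to $K$ and $H(t)$ gives constants $C_{1},C_{2}$ independent of $t$ with $\|s(t)\|_{L^{\infty}}\leq C_{1}\|s(t)\|_{L^{1}}+C_{2}$. Hence there is a sequence $t_{i}\to\infty$ with $l_{i}:=\|s(t_{i})\|_{L^{1}}\to\infty$; set $s_{i}:=s(t_{i})$ and $u_{i}:=s_{i}/l_{i}$. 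Each $u_{i}$ is $K$-self-adjoint, trace-free (the hypothesis $\tr(\Phi(K))=0$ is preserved by the flow and forces $\tr(s(t))=0$), and $\|u_{i}\|_{L^{1}}=1$.

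Next I would extract a weak $L^{2}_{1}$ limit of $u_{i}$. Along the flow, differentiating under the integral and using $H^{-1}\partial_{t}H=\Phi(H)$ yields
\begin{equation*}
\frac{d}{dt}\mathcal{M}(K,H(t))=-\int_{X}|\Phi(H(t))|^{2}_{H(t)}\frac{\omega^{n}}{n!}\leq 0,
\end{equation*}
so $\mathcal{M}(K,H(t_{i}))\leq \mathcal{M}(K,K)=0$. Inserting this into (\ref{DFkey}) and estimating the first term by $4\|\Phi(K)\|_{L^{\infty}}l_{i}$ yields
\begin{equation*}
\int_{X}\langle\Psi(s_{i})(Ds_{i}),Ds_{i}\rangle_{K}\frac{\omega^{n}}{n!}\leq 4\|\Phi(K)\|_{L^{\infty}}\,l_{i}.
\end{equation*}
In the eigenbasis of $s_{i}=l_{i}u_{i}$ the integrand becomes a sum of terms weighted by $l_{i}^{2}\Psi(l_{i}\lambda_{j},l_{i}\lambda_{k})|(Du_{i})^{k}_{j}|^{2}$. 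The standard Hitchin--Simpson functional-calculus lemma asserts that, on any compact set and for any smooth $\phi:\mathbb{R}^{2}\to\mathbb{R}_{\geq 0}$ with $\phi(\lambda,\mu)<(\lambda-\mu)^{-1}$ on $\lambda>\mu$, one has $\phi(\lambda,\mu)\leq l\Psi(l\lambda,l\mu)$ once $l$ is large. Dividing the above display by $l_{i}$ and applying this inequality produces a uniform bound $\int_{X}\langle\phi(u_{i})(Du_{i}),Du_{i}\rangle_{K}\frac{\omega^{n}}{n!}\leq C$, giving uniform $L^{2}_{1}$ control on $u_{i}$. Passing to a subsequence, $u_{i}\rightharpoonup u_{\infty}$ weakly in $L^{2}_{1}$ and strongly in $L^{1}$, so $u_{\infty}$ is $K$-self-adjoint, trace-free, $L^{\infty}$-bounded, and nonzero with $\|u_{\infty}\|_{L^{1}}=1$.

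Since $u_{\infty}\not\equiv 0$ is trace-free, its spectrum achieves at least two distinct real values on a set of positive measure. Fix $c\in\mathbb{R}$ strictly between two such values and let $\pi_{c}$ be the spectral projection of $u_{\infty}$ onto eigenvalues $\leq c$. Approximating $\chi_{(-\infty,c]}$ uniformly by smooth functions and applying the functional-calculus inequality in its reverse form to the family $u_{i}$, one obtains $\pi_{c}\in L^{2}_{1}(X,\mbox{End}(E))$ together with
\begin{equation*}
\pi_{c}^{2}=\pi_{c}^{*K}=\pi_{c},\qquad (Id-\pi_{c})D\pi_{c}=0\quad\text{a.e. on }X.
\end{equation*}
By Lemma \ref{l:1}, $\pi_{c}(E)$ is a $D$-subbundle of $E$, and it is proper since $0<|\pi_{c}|^{2}_{K}=\mbox{rank}(\pi_{c}(E))<r$ by the choice of $c$. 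This contradicts the simplicity of $(E,D)$, and therefore $\|s(t)\|_{L^{1}}$, a fortiori $\|s(t)\|_{L^{\infty}}$, must be uniformly bounded.

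The main obstacle is the spectral-calculus step that converts the upper bound on the Donaldson functional into the uniform $L^{2}_{1}$ estimate for $u_{i}$ and, via the reverse comparison between $\phi$ and $l\Psi(l\cdot,l\cdot)$, into the almost-everywhere identity $(Id-\pi_{c})D\pi_{c}=0$ for the limit projection. The NHYM hypothesis is used only through the identity $D^{*}_{H}\psi_{H}=2\sqrt{-1}\Lambda_{\omega}G_{H}$ and through the explicit form of the kernel $\Psi$ appearing in (\ref{DFkey}); the rest is a Rellich compactness plus smooth-spectral-approximation argument in the spirit of \cite{S1}.
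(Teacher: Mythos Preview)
Your outline matches the paper's proof almost step for step: reduce to an $L^{1}$ bound via Proposition~\ref{Prop:Ineq}, assume $l_{i}=\|s(t_{i})\|_{L^{1}}\to\infty$, normalize $u_{i}=s(t_{i})/l_{i}$, use monotonicity of $\mathcal{M}$ together with (\ref{DFkey}) and the asymptotics of $l\Psi(l\cdot,l\cdot)$ to bound $u_{i}$ in $L^{2}_{1}$, extract a weak limit $u_{\infty}$, and produce a $D$-subbundle from a spectral projection via Lemma~\ref{l:1}.

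There is, however, one genuine gap. You say ``the spectrum of $u_{\infty}$ achieves at least two distinct real values on a set of positive measure'' and then pick $c$ between them. This is too weak: for $\pi_{c}$ to have rank strictly between $0$ and $r$ \emph{everywhere}, and for the verification of $(Id-\pi_{c})D\pi_{c}=0$ to go through, you must first prove that the eigenvalues of $u_{\infty}$ are \emph{constant almost everywhere}. The paper singles this out as a separate lemma: one passes to the limit in the key inequality to obtain, for every admissible $\zeta$,
\[
\int_{X}\tr\big((-4\sqrt{-1}\Lambda_{\omega}G_{K})u_{\infty}\big)\frac{\omega^{n}}{n!}+\int_{X}\langle\zeta(u_{\infty})(Du_{\infty}),Du_{\infty}\rangle_{K}\frac{\omega^{n}}{n!}\leq 0,
\]
and then, for an arbitrary smooth $\varphi:\mathbb{R}\to\mathbb{R}$, chooses $\Upsilon$ with $\Upsilon(x,x)=d\varphi(x,x)$ and $N\Upsilon^{2}(x,y)<(x-y)^{-1}$ for $x>y$, takes $\zeta=N\Upsilon^{2}$, and lets $N\to\infty$ to force $d\,\tr(\varphi(u_{\infty}))=0$. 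Only after the eigenvalues $\lambda_{1}<\cdots<\lambda_{l}$ are known to be global constants can one set $\pi_{\alpha}=P_{\alpha}(u_{\infty})$ with $P_{\alpha}$ a step function between $\lambda_{\alpha}$ and $\lambda_{\alpha+1}$ and verify the three Uhlenbeck--Yau conditions. Without this step your claim $0<|\pi_{c}|^{2}_{K}<r$ is unjustified, since the rank of $\pi_{c}$ could a~priori jump across $X$. Your closing paragraph flags the spectral calculus as ``the main obstacle'', but the specific missing ingredient is this constancy-of-eigenvalues lemma, not merely the passage to $(Id-\pi_{c})D\pi_{c}=0$.
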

\begin{proof}
  Following from Simpson's arguments as in \protect{\cite[Proposition 5.3]{S1}}, we will prove if the equality (\ref{PF1}) doesn't hold, there exists a $D$-invariant subbundle destabilizing the simplicity. By Proposition \ref{Prop:Ineq} and Proposition \ref{HFP1}, we have
  \begin{equation*}
    ||s(t)||_{L^{\infty}}\leq C_1||s(t)||_{L^1}+C_2,
  \end{equation*}
  where $C_{1}, C_{2}$ are constants independent of $t$. So it is enough to show that $\|s(t)\|_{L^{1}}<C$.

  If not, there exists a  subsequence $t_i\to +\infty$ such that
  \begin{equation}\label{PF2}
    \lim_{t_{i}\rightarrow +\infty}\|s(t_{i})\|_{L^{1}}=+\infty.
  \end{equation}
  Set $u_i=l_{i}^{-1}s(t_i)$, where $l_i=||s(t_i)||_{L^1}$, then we have $||u_i||_{L^1}=1$. Moreover,
  \begin{equation}
    {\rm tr}u_{i}=0,\quad ||u_i||_{L^{\infty}}\leq C.
  \end{equation}
By the definition of Donaldson's functional, we have
 \begin{equation}
     \frac{d}{dt}M(K,H(t))=-16\int_{X}|\sqrt{-1}\Lambda_{\omega}G_{H(t)}|^{2}_{H(t)}\frac{\omega^{n}}{n!},
 \end{equation}
 so $M(K,H(t))<0$ for $t>0$. Hence
  \begin{equation}\label{PF3}
    \int_X{\rm tr}\bigg(\big(-4\sqrt{-1}\Lambda_{\omega}G_{K}\big)u_i\bigg)\frac{\omega^n}{n!}+l_i\int_{X}
    \langle \Psi(l_iu_i)(Du_i),Du_i\rangle_K\frac{\omega^n}{n!}< 0.
  \end{equation}
  Consider the following function
  \begin{equation}\label{PF4}
    l\Psi(lx,ly)=\begin{cases}
      \frac{e^{l(y-x)}-l(y-x)-1}{l(y-x)^{2}},&x\neq y,\\
      l/2,&x=y.
    \end{cases}
  \end{equation}
  According to $||u_i||_{L^{\infty}}\leq C$, we may assume that $(x,y)\in [-C,C]\times [-C,C]$. Then we can easy check that
  \begin{equation}\label{PF5}
    l\Psi(lx,ly)\to\begin{cases}
      \frac{1}{x-y},&x>y,\\
      +\infty,&x\leq y,
    \end{cases}
  \end{equation}
  increases monotonically as $l\to \infty$. Let $\zeta:\mathbb{R}\times \mathbb{R}\to \mathbb{R}$ be a positive smooth function satisfy $\zeta(x,y)\leq \frac{1}{x-y}$ whenever $x>y$. Then from (\ref{PF3}),(\ref{PF5}) and the arguments in \protect{\cite[Lemma 5.4]{S1}}, we obtain
  \begin{equation}\label{PF6}
    \int_X{\rm tr}\bigg(\big(-4\sqrt{-1}\Lambda_{\omega}G_K\big)u_i\bigg)\frac{\omega^n}{n!}+\int_{X}\langle
    \zeta(u_i)(Du_i),Du_i\rangle_K\frac{\omega^n}{n!}\leq 0,
  \end{equation}
  for $i\gg 0$. In particular, if we take $\zeta=\frac{1}{3C}$. It is obvious to check that $\frac{1}{3C}<\frac{1}{x-y}$ since $-C\leq x,y\leq C$ and $x>y$. This implies that
  \begin{equation*}
    \int_X{\rm tr}\bigg(\big(-4\sqrt{-1}\Lambda_{\omega}G_K\big)u_i\bigg)\frac{\omega^n}{n!}+\frac{1}{3C}\int_{X}|Du_i|^2_K\frac{\omega^n}{n!}\leq 0,
  \end{equation*}
  for $i\gg 0$. Then we have
  \begin{equation*}
    \int_{X}|Du_i|_K^2\frac{\omega^n}{n!}<+\infty.
  \end{equation*}
  Thus, $u_i$ are bounded in $L_1^2$. Choose a subsequence $\{u_{ij}\}$ such that $u_{ij}\rightharpoonup u_{\infty}$ weakly in $L_1^2$, still denoted by $u_i$ for simplicity. Noting that $L_1^2\hookrightarrow L^1$ is compact, we get
  \begin{equation*}
    1=||u_i||_{L^1}\to ||u_{\infty}||_{L^1}.
  \end{equation*}
  This means that $||u_{\infty}||_{L^1}=1$ and $u_{\infty}$ is non-trivial. By using (\ref{PF6}) and following the same discussion as in \protect{\cite[Lemma 5.4]{S1}}, we have
  \begin{equation}\label{PF7}
  \int_X{\rm tr}\bigg(\big(-4\sqrt{-1}\Lambda_{\omega}G_K\big)u_{\infty}\bigg)\frac{\omega^n}{n!}+\int_{X}\langle
    \zeta(u_{\infty})(Du_{\infty}),Du_{\infty}\rangle_K\frac{\omega^n}{n!}\leq 0.
  \end{equation}

By the same argument as in \cite{S1}, we can obtain the following lemma.
  \begin{lem}
  The eigenvalues of $u_{\infty}$ are constants almost everywhere.
  \end{lem}
  \begin{proof}
  It is enough to show that for any $\varphi:\mathbb{R}\rightarrow\mathbb{R}$, $\mbox{tr}(\varphi(u_{\infty}))$ is a constant almost everywhere. First, we have
\begin{equation}
d\tr(\varphi(u_{\infty}))=\tr(D\varphi(u_{\infty}))=\tr(d\varphi(u_{\infty})(Du_{\infty})).
\end{equation}
Suppose $N>0$ is a constant which is big enough. Choose $\Upsilon:\mathbb{R}\times \mathbb{R}\to \mathbb{R}$ such that
\begin{equation}
\Upsilon(x,x)=d\varphi(x,x)
\end{equation}
and
\begin{equation}
N\Upsilon^{2}(x,y)<\frac{1}{x-y}
\end{equation}
for $x>y$. Then
\begin{equation}
\tr(d\varphi(u_{\infty})(Du_{\infty}))=\tr(\Upsilon(u_{\infty})(Du_{\infty})).
\end{equation}
Let $\zeta=N\Upsilon^{2}$. By (\ref{PF7}), we get
\begin{equation}
\int_{X}|\Upsilon(u_{\infty})(Du_{\infty})|_{K}^{2}\frac{\omega^{n}}{n!}\leq \frac{4}{N}\int_{X}\tr(\sqrt{-1}\Lambda_{\omega}G_{K}u_{\infty})\frac{\omega^{n}}{n!}.
\end{equation}
Combining above, we have
\begin{equation}
\|d\tr(\varphi(u_{\infty}))\|_{L^{2}}^{2}\leq \frac{C}{N}.
\end{equation}
Let $N\rightarrow +\infty$, then we get $d\tr(\varphi(u_{\infty}))=0$.
  \end{proof}

  Next, we want to construct a $D$-invariant subbundle which contradicts the simplicity of $(E,D)$ by using Lemma \ref{l:1} about $L^2_1$ bundle. Let $\lambda_1<\cdots<\lambda_l$ be the distinct eigenvalues of $u_{\infty}$. Since ${\rm tr}u_{\infty}=0$ and $||u||_{L^1}=1$, we must have $l\geq 2$. For each $1\leq \alpha \leq l-1$, we define a function $P_{\alpha}$ such that
  \begin{equation*}
    P_{\alpha}=\begin{cases}
      1,&x\leq \lambda_{\alpha},\\
      0,&x\geq \lambda_{\alpha+1}.
    \end{cases}
  \end{equation*}
  Set $\pi_{\alpha}=P_{\alpha}(u_{\infty})$. Then from \protect{\cite[p.887]{UhYau}}, we have
    \begin{enumerate}
      \item[$(1)$] $\pi_{\alpha}\in L_1^2$;
      \item[$(2)$] $\pi_{\alpha}^2=\pi_{\alpha}=\pi_{\alpha}^{*K}$;
      \item[$(3)$] $({\rm Id}-\pi_{\alpha})D\pi_{\alpha}=0$.
    \end{enumerate}
  By Lemma \ref{l:1}, $\{\pi_{\alpha}\}_{\alpha=1}^{l-1}$ determine $l-1$ $D$-subbundles of $E$ denoted by $E_{\alpha}$. So we get a contradiction.
\end{proof}

\begin{proof}[Proof of Theorem \ref{thm:m}]
We have already showed that the existence of harmonic metrics implies semisimplicity. So we just need to show the converse is also true. It  comes from Proposition \ref{Prop:m}, Proposition \ref{p:last} and the regularity theory of elliptic equation.
\end{proof}

\medskip

{\bf  Acknowledgement:} The research was supported by the National Key R and D Program of China 2020YFA0713100.
The  authors are partially supported by NSF in China No.12141104, 11801535 and 11721101.

\medskip


\end{document}